\documentclass[12pt]{article}
\usepackage{amsmath,amssymb,amsthm}
\usepackage{array}
\usepackage{fullpage}
\usepackage{enumitem}
\usepackage[table]{xcolor}
\usepackage{graphicx}
\usepackage[colorlinks=true,
linkcolor=webgreen,
filecolor=webbrown,
citecolor=webgreen]{hyperref}
\definecolor{webgreen}{rgb}{0,.5,0}
\definecolor{webbrown}{rgb}{.6,0,0}

\newcommand{\seqnum}[1]{\href{https://oeis.org/#1}{\rm \underline{#1}}}

\theoremstyle{plain}
\newtheorem{theorem}{Theorem}
\newtheorem{corollary}[theorem]{Corollary}
\newtheorem{lemma}[theorem]{Lemma}
\newtheorem{proposition}[theorem]{Proposition}

\theoremstyle{definition}
\newtheorem{definition}[theorem]{Definition}
\newtheorem{example}[theorem]{Example}

\theoremstyle{remark}
\newtheorem{remark}[theorem]{Remark}

\title{A $3$-adic Recurrence for the Fixed Points of the Josephus Function $J_4$}
\author{
Yunier Bello-Cruz\\
\small School of Mathematical and Statistical Sciences\\
\small Northern Illinois University\\
\small DeKalb, IL 60115\\
\small USA\\
\small \texttt{yunierbello@niu.edu}
\and
Roy Quintero-Contreras\\
\small School of Mathematical and Statistical Sciences\\
\small Northern Illinois University\\
\small DeKalb, IL 60115\\
\small USA\\
\small \texttt{rquinterocontreras@niu.edu}
}
\date{}

\begin{document}
\maketitle

\begin{abstract}
\noindent In the Josephus problem with stepsize four, the participants in a
circle are eliminated one by one, every fourth person leaving, until a single
survivor remains. A fixed point occurs when the survivor turns out to be the
person who began in the last seat. The circle sizes with this property form
the sequence 1; 21; 38; 51; 122; 163; 689; 919; 2{,}906; and so on, whose gaps
fluctuate erratically. This paper explains the fluctuation and turns it into
a recurrence. Between consecutive fixed points, the circle sizes at which the
survivor falls exactly one or two seats short of the last one, the
near-misses, group into alternating blocks of the two kinds, and the length
of every block is the number of times three divides a simple quantity built
from the circle size that precedes the block. Iterating these divisibility
counts carries each fixed point to the next. Stepsize four is the first case
in which two kinds of near-miss coexist, and the alternation they force is
what separates it from the solved cases of stepsizes two and three. As a
byproduct, the survivor's position for an arbitrary circle size can be
computed by walking the near-misses of a single interval, in a number of
steps proportional to their count, rather than stepping through every smaller
circle as the defining recursion does.
\end{abstract}

\section{Introduction}\label{sec:introduction}

In the Josephus problem, $n$ participants stand in a circle and are eliminated in
turn, every $k$-th at a time, until a single survivor remains. The position of
that survivor, viewed as a function of $n$, is denoted $J_k(n)$, where $J_k$ is the Josephus function. The
problem originates in antiquity \cite{Bac,Jos} and has been revisited many times
since. Euler
gave an early recursive treatment~\cite{Eul}, and the case $k=2$ admits the familiar
closed form $J_2(n)=2n-2^{\lfloor\log_2 n\rfloor+1}+1$ recorded in~\cite{Gra,Knu}.
For general $k$ the function is far less transparent, and the literature has
approached it from two main directions.

One direction is algorithmic. Knuth describes two $O(n\log n)$ schemes for generating
the elimination order~\cite{Gra,Knu}, and Lloyd later gave an $O(n\log m)$
algorithm~\cite{Llo}; the survivor itself can be computed in linear time from the
standard reduction $J_k(n)=\bigl(J_k(n-1)+k-1\bigr)\bmod n+1$. The other direction
studies the structure of $J_k$ and its variants. Robinson~\cite{Rob} and
Halbeisen-Hungerb\"uhler~\cite{Hal} analyze the asymptotic behavior of the survivor,
Odlyzko and Wilf relate the problem to functional iteration~\cite{OW}, and several
authors have introduced generalizations: Tait~\cite{Tai}, Uchiyama~\cite{Uch}, and
Th\'eriault~\cite{The} treat variants with different counting rules, while Ruskey and
Williams~\cite{RW} consider a version in which each participant has several lives. The
relevant integer sequences are catalogued in the OEIS~\cite{OEIS}. The present paper
belongs to the second, structural, line of work, and continues the study of the fixed
points of $J_k$ begun in~\cite{BCQC-JIS,BCQC-J3,BCQC-FBE}.

A positive integer $n$ is a fixed point of $J_k$ if $J_k(n)=n$: the survivor
ends up in the very seat they started from, no matter how the counting falls. Fixed
points exist for every $k\ge2$ and form a strictly increasing sequence
$\bigl(n_p^{(\ell)}\bigr)_{\ell\ge1}$; see~\cite{BCQC-JIS}. The question we take up is
how one fixed point determines the next. An answer is worth having because a
recurrence on the fixed points lets one evaluate $J_k$ at large arguments without
marching through all the intermediate values, as we show for $k=4$ in
Subsection~\ref{sub:evaluation}.

A picture helps. Figure~\ref{fig:J4_1_80} plots $J_4$: the values climb in straight
runs of slope~$4$, each run topping out just under the diagonal $y=n$ before dropping
back down. The tops of those runs are the points that matter. Most of them fall one or
two short of the diagonal; a few land exactly on it, and those are the fixed points.
Take the fixed point $n_p^{(2)}=21$. The next run tops out at $n=28$ with
$J_4(28)=27$, one short of the diagonal, and then a single long climb reaches the next
fixed point $n_p^{(3)}=38$. That lone near-miss at $28$ is the entire story of the
interval $(21,38)$, and the goal of this paper is to predict such near-misses, how
many, of what kind, and where, directly from $21$.

Those run-tops are what the literature calls high extremal points: the sizes $n_e$
at which $J_k$ attains a local maximum relative to the diagonal $y=n$. A high
extremal point that is not itself a fixed point (a near-miss, in the language
above) is called pure. How the pure points are distributed inside each interval
$(n_p^{(\ell)},n_p^{(\ell+1)})$ is exactly what controls the jump to the next fixed
point, and the difficulty of describing them grows with $k$.

For $k=2$ there is nothing to describe: the fixed points are simply
$n_p^{(\ell)}=2^\ell-1$, and no pure points lie between them. The case $k=3$ is the
first with pure points, and they are still tame: in each interval they are all of one
kind, and their number is the single index
\[
\overline{m}_\ell := \nu_2\!\bigl(3n_p^{(\ell)}+2\bigr),
\]
the $2$-adic valuation of $3n_p^{(\ell)}+2$. This one index drives the explicit recurrence for the terms of the fixed points sequence of $J_3$
(\seqnum{A182459} in the OEIS~\cite{OEIS}),
\[
n_p^{(\ell+1)}
=\frac{3^{\overline{m}_\ell}(3n_p^{(\ell)}+2)-2^{\overline{m}_\ell}}{2^{\overline{m}_\ell+1}},
\]
established in~\cite{BCQC-J3}, which passes directly from $n_p^{(\ell)}$ to
$n_p^{(\ell+1)}$ without computing any intermediate value. The arithmetic of this
sequence is examined further in~\cite{BCQC-FBE}, where a connection with the Chinese
Remainder Theorem is drawn and a recursive digit pattern in the base-$3/2$ expansions
of consecutive fixed points is identified.

This paper carries out the corresponding analysis for $k=4$, where the picture
changes in one decisive way: the pure points come in two kinds, not one. A pure point
$n_e$ is of type-$1$ if $J_4(n_e)=n_e-1$ and of type-$2$ if $J_4(n_e)=n_e-2$:
it misses the diagonal by one or by two. Inside each interval the two kinds do not
intermingle freely; they group into blocks, where a block is a maximal run of pure
points of a single type, and consecutive blocks alternate in type. The interval
$(21,38)$ above is the simplest case: one block, holding the single type-$1$ point
$28$.

The analysis proceeds in three stages. First, the residue of $n_p^{(\ell)}$ modulo~$3$, which we call its ternarity, fixes the type of the first block, and a single
$3$-adic valuation, $\nu_3(4n_p^{(\ell)}+3)$ or $\nu_3(4n_p^{(\ell)}+2)$ according to
that type, fixes the block's length and its last point. Second, given the last point
$n_e$ of one block, the length of the next is delivered by one of two
integer-valued functions, the transition meters $\mu_{1\to2}$ and
$\mu_{2\to1}$; these too turn out to be single $3$-adic valuations,
$\nu_3(4n_e+3)$ and $\nu_3(4n_e+5)$ (Corollary~\ref{C:meters-val}), though
proving it takes a floor-by-floor divisibility analysis. Third, a termination rule
recognizes when no further block is possible and turns the last pure point directly
into $n_p^{(\ell+1)}$. The main theorem (Theorem~\ref{thm:main}) strings these three
stages together into a recurrence from $n_p^{(\ell)}$ to $n_p^{(\ell+1)}$.

The contrast with $k=3$ is concrete. There a single valuation index settles an
entire interval. For $k=4$ each interval needs a chain of valuations, one per
block, alternating between the two linear forms above, and establishing each
link costs a floor-by-floor test built on two offset sequences, $(A_t)_{t\ge0}$
and $(B_t)_{t\ge1}$, introduced in Section~\ref{sec:meters}. The chain is the
extra arithmetic that having two kinds of pure points demands.

Several integer sequences arise naturally in this study. The Josephus survivor
function itself appears in the OEIS~\cite{OEIS} as the triangle \seqnum{A032434}, with
the case $k=2$ recorded separately as \seqnum{A006257}. The central object here is
the fixed point sequence of $J_4$,
\[
1;\,21;\,38;\,51;\,122;\,163;\,689;\,919;\,2{,}906;\,3{,}875;\,5{,}167;\,51{,}617;\,68{,}823;\,
163{,}137;\,290{,}022; \dots\, ;
\]
which we denote $\bigl(n_p^{(\ell)}\bigr)_{\ell\ge1}$ and study throughout
(\seqnum{A385333}). Two further sequences derived from it record the combinatorial
shape of each interval from the second fixed point $n_p^{(2)}=21$ on; the interval
after $n_p^{(1)}=1$ lies in the small irregular range $n<6$ excluded by the
structural theorem and is omitted. The first records the number of blocks $r(\ell)$
between $n_p^{(\ell)}$ and $n_p^{(\ell+1)}$ for $\ell\ge2$,
\[
1;\,0;\,1;\,0;\,3;\,0;\,2;\,0;\,0;\,5;\,0;\,2;\,1;\,4;\,1;\,0;\,1;\,1;\,0;\,2;\,0;\,0;\,0;\,1;\dots;
\]
and the second records the type $t_1(\ell)$ of the first block in each such interval
($0$ for a direct transition with no pure points, otherwise $1$ or $2$),
\[
1;\,0;\,1;\,0;\,2;\,0;\,2;\,0;\,0;\,2;\,0;\,1;\,1;\,1;\,1;\,0;\,1;\,1;\,0;\,2;\,0;\,0;\,0;\,2;\dots .
\]
The recurrence developed in Section~\ref{sec:structure} computes all of these without
enumerating the intervening values of $J_4$.

The paper is organized as follows.
Section~\ref{sec:definitions} sets up the definitions and notation: high extremal
points, ternarity, and the $3$-adic valuation.
Section~\ref{sec:blocks} describes the block structure between consecutive fixed
points.
Section~\ref{sec:extremal} proves the first-block formula and recalls
from~\cite{BCQC-JIS} the local linearity of $J_4$ on which everything rests.
Section~\ref{sec:meters} introduces the transition meters and their divisibility
diagrams, and derives their closed valuation form.
Section~\ref{sec:structure} ties everything together: it proves the termination rule
(Subsection~\ref{sub:arrival}), assembles the rules into the main structural theorem
(Subsection~\ref{sub:main}), and uses that theorem to evaluate $J_4(n)$ at an
arbitrary argument without iterating through intermediate values
(Subsection~\ref{sub:evaluation}).
\section{Definitions and notation}\label{sec:definitions}

This section fixes the notation and records the type-value identity, the only fact needed at this stage. The local linearity of $J_4$, which drives the rest of the analysis, is recalled in Section~4.
Throughout, $\mathbb{N}=\{1,2,3,\dots\}$ denotes the positive integers.

\begin{definition}[Josephus function and fixed points]\label{D:josephus}
For $n\in\mathbb{N}$, the value $J_4(n)$ is the original position of the
survivor when $n$ participants stand at positions $1,\dots,n$ around a circle,
the count starts at position $1$, and every fourth participant is removed.
Equivalently, $J_4(1)=1$ and
\[
J_4(n)=\bigl(J_4(n-1)+3\bigr)\bmod n+1 \qquad (n\ge2).
\]
A positive integer $n_p$ is a fixed point of $J_4$ if $J_4(n_p)=n_p$: the
participant in the last seat survives. The fixed points form a strictly
increasing sequence, denoted $\bigl(n_p^{(\ell)}\bigr)_{\ell\ge1}$ with
$n_p^{(1)}=1$.
\end{definition}

The next two definitions name the two coordinates we read off a fixed point: its
residue modulo~$3$, which decides the type of the first block (Section~\ref{sec:blocks}),
and the local shape of $J_4$ near the diagonal, which produces the blocks in the first
place.

\begin{definition}[Ternarity]\label{D:ternarity}
The ternarity of $n\in\mathbb{N}$ is its residue $n \bmod 3 \in\{0,\,1,\,2\}$.
\end{definition}

As $J_4$ runs, its graph repeatedly climbs almost up to the diagonal $y=n$ and then
drops back; the points where it comes closest are the ones that carry all the
structure.

\begin{definition}[High extremal points, types, and pure points]\label{D:hep}
A positive integer $n_e$ is a high extremal point of $J_4$ if
\[
J_4(n_e)\in\{n_e;\,n_e-1;\,n_e-2\}.
\]
Its type is $t:=n_e-J_4(n_e)\in\{0,1,2\}$, the amount by which it falls short of the
diagonal. A high extremal point of type-$0$ is a fixed point; one of type-$1$ or
type-$2$ is called a pure point.
\end{definition}

The displayed condition is meaningful for every $n_e$, but we use it only for
$n_e\ge6$. The first five values of $J_4$ are $1;\, 1;\, 2;\, 2;\, 1$;  so $n=2,\, 3$, and
$4$ satisfy the condition without sitting at the top of any slope-$4$ climb;
from $n=6$ on, the high extremal points are exactly the run-tops visible in
Figure~\ref{fig:J4_1_80}, and every statement below carries the threshold
$n_e\ge6$.

The whole analysis rests on one local fact about $J_4$, which we isolate here because
it is invoked at nearly every step.

\begin{lemma}[Type-value identity]\label{L:type-value}
For every high extremal point $n_e\ge 6$,
\[
J_4(n_e+1)=3-t,
\qquad\text{where } t \text{ is the type of } n_e.
\]
\end{lemma}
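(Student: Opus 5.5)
This follows directly from the defining recursion in Definition~\ref{D:josephus}, applied once at $n=n_e+1$. Write $J_4(n_e)=n_e-t$ with $t\in\{0,1,2\}$ the type of $n_e$. The recursion gives
\[
J_4(n_e+1)=\bigl(J_4(n_e)+3\bigr)\bmod (n_e+1)+1=\bigl(n_e-t+3\bigr)\bmod(n_e+1)+1 .
\]
The plan is to reduce $n_e-t+3$ modulo $n_e+1$ explicitly. Write $n_e-t+3=(n_e+1)+(2-t)$. Since $0\le 2-t\le 2<n_e+1$ for $n_e\ge 6$, the residue is exactly $2-t$. Hence $J_4(n_e+1)=2-t+1=3-t$, as claimed.

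The key steps, in order, are these. First, substitute the type relation $J_4(n_e)=n_e-t$ into the recursion. Second, check the size bounds, namely $n_e+1\le n_e-t+3<2(n_e+1)$. The left inequality holds because $t\le 2$. The right one reduces to $1-t<n_e$, which is trivially true. So exactly one copy of $n_e+1$ is subtracted. Third, read off the value.

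There is no real obstacle. The only point needing care is the range check that the quotient is exactly $1$, which is where $t\in\{0,1,2\}$ enters. The hypothesis $n_e\ge6$ is not actually needed for the arithmetic: any $n_e\ge 2$ works. It matches the paper's convention of excluding the irregular small range, so I would remark that the identity is purely a one-step consequence of the recursion.
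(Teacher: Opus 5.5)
Your proof is correct. It differs from the paper's only because the paper gives no argument of its own: it discharges the lemma by citing \cite[Theorem~5(b)]{BCQC-JIS}, the general-$k$ result from the earlier work. You instead apply the recursion of Definition~\ref{D:josephus} once, using $n_e-t+3=(n_e+1)+(2-t)$ with $0\le 2-t<n_e+1$. This makes the lemma self-contained and shows it is an immediate consequence of the definition of type, not a structural fact about run-tops. Your remark about the threshold is also accurate. The identity holds for every high extremal point $n_e\ge2$, including the irregular points $2,3,4$. It fails only at $n_e=1$, where $J_4(2)=1\ne3$. So the restriction $n_e\ge6$ is a convention inherited from the structural results that follow, not something this identity needs. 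The citation buys only consistency with the general-$k$ framework of \cite{BCQC-JIS}, and your one-line reduction carries over to general $k$ with $k-1$ in place of $3$.
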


\begin{proof}
This is~\cite[Theorem~5(b)]{BCQC-JIS}.
\end{proof}

In words: the value of $J_4$ at the integer just past a high extremal point reads off
its type, $2$ after a type-$1$ point, $1$ after a type-$2$ point, and $3$ after a
fixed point. The threshold $n_e\ge6$ excludes only the initial irregular values
$J_4(1),\dots,J_4(5)$, after which the structure settles into the regular pattern we
now describe. Calling $n$ a low
extremal point when $n\ge6$ and $J_4(n)\in\{1,2,3\}$, high and low extremal points
strictly alternate from $n=6$ on, and the low extremal point immediately after $n_e$
is $n_e+1$, which is exactly the integer the type-value identity describes.

The block lengths in Section~\ref{sec:meters} will be counted by powers of~$3$
dividing certain linear forms, so we also fix the standard valuation.

\begin{definition}[$3$-adic valuation]\label{D:valuation}
For $m\in\mathbb{N}$, the $3$-adic valuation $\nu_3(m)$ is the largest integer
$t\ge 0$ with $3^t\mid m$.
\end{definition}

Figure~\ref{fig:J4_1_80} shows $J_4(n)$ for $1\le n\le 165$, with its six fixed
points $1;\, 21;\, 38;\, 51;\, 122;\, 163$ on the diagonal and the pure points marked by type.
Two features visible there are used constantly. The graph rises in straight segments
of slope~$4$, so between two consecutive high extremal points $J_4$ is completely
determined by its starting value; and each segment ends at a high extremal point,
just short of (or on) the diagonal. These two facts are what let us replace the
step-by-step recurrence for $J_4$ by jumps from one high extremal point to the next.

\begin{figure}[!h]
\centering
\includegraphics[width=0.72\linewidth]{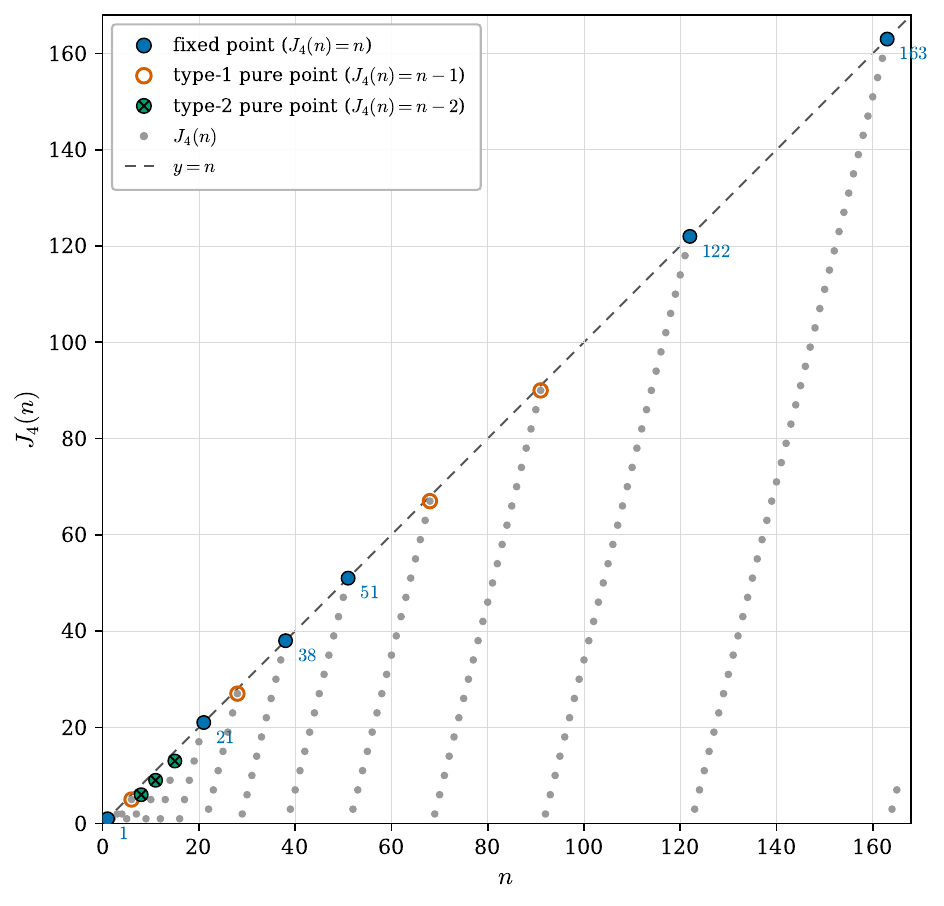}
\caption{The Josephus function $J_4(n)$ for $1\le n\le 165$, plotted as discrete
values. Each climb of slope~$4$ rises toward the diagonal $y=n$ and is capped by a
high extremal point. The fixed points shown are
$n=1; 21; 38; 51; 122; 163$; between them sit the pure high extremal points, separated by
type: type-$1$ points ($J_4(n)=n-1$) at $n=6;\,28;\,68;\,91$, and type-$2$ points
($J_4(n)=n-2$) at $n=8;\,11;\,15$. Pure points are marked only for $n\ge6$, the regular range in which the
type-value identity (Lemma~\ref{L:type-value}) applies. The interval $(51,\,122)$ shows a run of two type-$1$
points ($n=68,\,91$); the interval $(122,\,163)$ is a direct transition, with no pure
points between the two fixed points, as happens whenever the lower fixed point is
$\equiv 2$ (mod $3$) (Proposition~\ref{P:initial-block}).}
\label{fig:J4_1_80}
\end{figure}

Table~\ref{tab:J4_fixed_points} lists the first fifteen fixed points of $J_4$,
illustrating the rapid and irregular growth of the sequence.

\begin{table}[htbp]
\centering
\small
\setlength{\tabcolsep}{4pt}
\begin{tabular}{c|*{15}{c}}
\hline
$\ell$
& 1 & 2 & 3 & 4 & 5 & 6 & 7 & 8
& 9 & 10 & 11 & 12 & 13 & 14 & 15 \\
\hline
$n_p^{(\ell)}$
& 1 & 21 & 38 & 51 & 122 & 163 & 689 & 919
& 2{,}906 & 3{,}875 & 5{,}167 & 51{,}617 & 68{,}823 & 163{,}137 & 290{,}022 \\
\hline
\end{tabular}
\caption{The first fifteen fixed points of $J_4$.}
\label{tab:J4_fixed_points}
\end{table}

\begin{example}\label{Ex:919}
Take the fixed point $n_p^{(8)}=919$. In the interval up to the next fixed point
$n_p^{(9)}=2{,}906$ there are exactly three pure points: $1{,}225$ of type-$2$, then
$1{,}634$ and $2{,}179$ of type-$1$. Reading them in order, the types run
$2;\,1;\,1$: a single type-$2$ point followed by two type-$1$ points. This grouping into
runs of one type, here a length-$1$ block then a length-$2$ block, is the
pattern the next section makes precise.
\end{example}
\section{Block structure between consecutive fixed points}\label{sec:blocks}

Section~\ref{sec:definitions} described single pure points; here we describe how they
sit together. Inside one interval the pure points are not scattered at random: equal
types cluster, and the clusters alternate. We record this with the language of blocks,
which carries through the rest of the paper.

\begin{definition}[Blocks of pure points]\label{D:blocks}
Let $\bigl(n_p^{(\ell)},\,n_p^{(\ell+1)}\bigr)$ be the open interval between two
consecutive fixed points, and suppose it contains at least one pure point. List
the pure points of the interval in increasing order. A block is a maximal run
of points of the same type within that list: consecutive entries of the list,
not consecutive integers, so the type-$1$ points $290$ and $387$ of
Example~\ref{Ex:163_689} below form a single block. Listed in increasing order,
the blocks of the interval are
\[
\mathcal B_1;\,\mathcal B_2;\,\dots;\,\mathcal B_r;\,\qquad r\ge 1,
\]
and we write $n_e^{(\ell,j)}$ for the last pure point of block $\mathcal B_j$.
The block count $r$ depends on the interval; we write $r=r(\ell)$ when that
dependence matters. The calligraphic letter keeps the blocks apart from the
offset sequence $(B_t)$ of Section~\ref{sec:meters}.
\end{definition}

We single out the last point $n_e^{(\ell,j)}$ of each block because, as
Section~\ref{sec:meters} shows, it is the input from which the length of the next
block is computed: each block hands its successor a starting point, and the chain of
these points is what the recurrence follows.

\begin{remark}\label{R:alternation}
Whenever an interval contains pure points, they form a finite alternating run
of blocks, consecutive blocks having opposite types. Finiteness comes from
Lemma~\ref{L:monotone} and the alternation from Corollary~\ref{C:successor},
both proved in Section~\ref{sec:extremal}: a maximal run of one type ends at a
point $m\not\equiv2$ (mod $3$), so the high extremal point after $m$ is either
the next fixed point or a pure point of the opposite type. We use both facts
freely from here on.
\end{remark}

\begin{example}\label{Ex:163_689}
Between the consecutive fixed points $n_p^{(6)}=163$ and $n_p^{(7)}=689$ there are
three blocks ($r=3$):
\[
\mathcal B_1=\{217\};\qquad \mathcal B_2=\{290;\,387\};\qquad \mathcal B_3=\{516\}.
\]
Their types are $2,1,2$, alternating, as Remark~\ref{R:alternation} requires, 
and the last points of the blocks are
\[
n_e^{(6,1)}=217;\qquad n_e^{(6,2)}=387;\qquad n_e^{(6,3)}=516.
\]
\end{example}

\begin{remark}[Possible block patterns]\label{R:block-patterns}
An interval therefore shows one of two behaviors: a direct transition with no pure
points at all, or an alternating chain $\mathcal B_1,\dots,\mathcal B_r$. A single block ($r=1$) is of
one type; for $r\ge2$ the first and last blocks may each be of either type, so the
chain falls into one of four endpoint patterns,
\[
(1\to\cdots\to1),\quad
(1\to\cdots\to2),\quad
(2\to\cdots\to1),\quad
(2\to\cdots\to2),
\]
all of which occur, though not equally soon. The pattern $2\to\cdots\to1$
appears in Example~\ref{Ex:919} and $2\to\cdots\to2$ in
Example~\ref{Ex:163_689}; the two examples below show $1\to\cdots\to2$ and a
longer instance of $2\to\cdots\to2$. The pattern $1\to\cdots\to1$ is much
rarer: among the intervals between the first sixty fixed points it occurs
exactly once, in the last of them, which opens at the twenty-digit fixed point
$n_p^{(59)}=67{,}668{,}528{,}084{,}558{,}408{,}666$ and holds three singleton
blocks of types $1;\,2;\,1$ (checked with the recurrence of
Section~\ref{sec:structure}).
\end{remark}

\begin{example}[A four-block pattern]\label{Ex:pattern1212}
Here the four blocks between $n_p^{(15)}=290{,}022$ and $n_p^{(16)}=1{,}629{,}537$
realize the pattern $1\to2\to1\to2$:
\[
\begin{gathered}
\underbrace{\{386{,}696;\,515{,}595\}}_{1};\qquad
\underbrace{\{687{,}460\}}_{2};\qquad
\underbrace{\{916{,}614\}}_{1};\qquad
\underbrace{\{1{,}222{,}152\}}_{2},
\end{gathered}
\]
where the subscript on each block gives its type.
Here the first block has two points and the rest are singletons, so the last points
are $n_e^{(15,1)}=515{,}595$; $n_e^{(15,2)}=687{,}460$; $n_e^{(15,3)}=916{,}614$; and
$n_e^{(15,4)}=1{,}222{,}152$.
\end{example}

\begin{example}[A five-block pattern]\label{Ex:pattern21212}
Here the five blocks between $n_p^{(11)}=5{,}167$ and $n_p^{(12)}=51{,}617$ realize
the pattern $2\to1\to2\to1\to2$:
\[
\begin{gathered}
\underbrace{\{6{,}889\}}_{2};\qquad
\underbrace{\{9{,}186\}}_{1};\qquad
\underbrace{\{12{,}248;\,16{,}331;\,21{,}775\}}_{2};\qquad
\underbrace{\{29{,}034\}}_{1};\qquad
\underbrace{\{38{,}712\}}_{2},
\end{gathered}
\]
again with the subscript recording the type of each block.
The middle block has three points; this is the interval whose lengths we recompute
from scratch in Section~\ref{sec:meters} (Example~\ref{Ex:altmeters_6889}).
\end{example}

\section{Extremal point analysis}\label{sec:extremal}

The arguments of this paper all run on one mechanism: given a high extremal point,
produce the next one and read off its type. This section assembles that mechanism. The
first two lemmas specialize results of~\cite{BCQC-JIS} to $k=4$ and serve as
computational tools; the third records a growth bound that guarantees finiteness.
Proposition~\ref{P:initial-block}, the first original result of the paper, then uses
them to pin down the first block of pure points after a fixed point. The argument
follows the template used for $k=3$ in~\cite{BCQC-J3}.

\begin{lemma}[Local linearity and next high extremal point for $J_4$]
\label{L:J4_engine}
Let $n_e\ge 6$ be a high extremal point of $J_4$ and set
\[
j:=J_4(n_e+1)\in\{1,2,3\},\qquad
\varepsilon\equiv n_e \pmod 3,\quad \varepsilon\in\{0,1,2\}.
\]
Define
\[
\delta(\varepsilon,j):=
\begin{cases}
1, & \varepsilon< j-1,\\
0, & \varepsilon\ge j-1,
\end{cases}
\qquad
m_0:=\left\lfloor\frac{n_e-(j-1)}{3}\right\rfloor.
\]
Then the following hold.
\begin{enumerate}
\item [ {\rm (a)}]
\(
J_4(n_e+1+m)=j+4m,\) for all \(m\in\{0,1,\dots,m_0\}.
\)
\item [ {\rm (b)}] The next high extremal point after $n_e$ is
\[
n_e^{+}:=\frac{4(n_e+1)-(\varepsilon+1)}{3}-\delta(\varepsilon,j),
\]
and, moreover,
\[
J_4(n_e^{+})=n_e^{+}-\bigl(3\delta(\varepsilon,j)+\varepsilon-(j-1)\bigr),
\qquad
J_4(n_e^{+}+1)=3(1-\delta(\varepsilon,j))-(\varepsilon-(j-1)).
\]
The value $3\delta(\varepsilon,j)+\varepsilon-(j-1)\in\{0,1,2\}$ determines the type
of $n_e^+$, and $\delta(\varepsilon,j)$ accounts for the rounding that arises when
$\varepsilon<j-1$.
\item [ {\rm (c)}]
The point $n_e^{+}$ is a fixed point of $J_4$ if and only if $\varepsilon=j-1$, in
which case
\[
n_e^{+}=\frac{4(n_e+1)-j}{3}.
\]
\end{enumerate}
\end{lemma}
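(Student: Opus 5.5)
We argue directly from the defining recursion $J_4(n)=\bigl(J_4(n-1)+3\bigr)\bmod n+1$. When $J_4(n-1)+3<n$, the reduction mod $n$ is inactive and $J_4(n)=J_4(n-1)+4$. So after the low point $n_e+1$, with $J_4(n_e+1)=j$ (Lemma~\ref{L:type-value}), the values climb linearly as $j+4m$ for as long as no wrap occurs. The wrap condition at argument $N=n_e+1+m$ (computing $J_4(N)$ from $J_4(N-1)$) is $j+4(m-1)+3\ge N$. Writing $N=n_e+1+m$, this reads $j+4m-1\ge n_e+1+m$, that is, $3m\ge n_e+2-j$. Hence part (a) holds for every $m$ with $3m\le n_e+1-j$, i.e.\ $m\le\lfloor (n_e+1-j)/3\rfloor$. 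Our first step is to reconcile this bound with the stated $m_0=\lfloor (n_e-(j-1))/3\rfloor$; the two expressions coincide, so (a) follows by induction on $m$.

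For (b), the last linear value is reached at $m=m_0$, the point $N_0=n_e+1+m_0$. The next step wraps, so $N_0$ is the run-top, i.e.\ the next high extremal point. We compute $m_0$ by cases on $\varepsilon$ relative to $j-1$. Write $n_e=3q+\varepsilon$, so that $n_e-(j-1)=3q+(\varepsilon-j+1)$ with $\varepsilon-j+1\in\{-2,\dots,2\}$. If $\varepsilon\ge j-1$, then $m_0=q$. If $\varepsilon<j-1$, then $m_0=q-1$, which is exactly where $\delta$ enters. Substituting $q=(n_e-\varepsilon)/3$ gives $n_e^+=n_e+1+(n_e-\varepsilon)/3-\delta=\bigl(4(n_e+1)-(\varepsilon+1)\bigr)/3-\delta$, as claimed. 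Next we compute $J_4(n_e^+)=j+4m_0$ and subtract it from $n_e^+=n_e+1+m_0$ to get the gap $n_e^+-J_4(n_e^+)=n_e+1-j-3m_0=\varepsilon-(j-1)+3\delta$. Finally, $J_4(n_e^++1)=(j+4m_0+3)-(n_e^++1)+1$ once we check that $j+4m_0+3<2(n_e^++1)$, so that only one subtraction of the modulus occurs. This simplifies to $3-\bigl(\varepsilon-(j-1)+3\delta\bigr)$, which matches the displayed $3(1-\delta)-(\varepsilon-(j-1))$.

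We must also verify that the gap lies in $\{0,1,2\}$, so that $n_e^+$ really is a high extremal point. This is immediate from the case split: when $\delta=0$, the gap is $\varepsilon-j+1\in\{0,1,2\}$, because $\varepsilon\le2$ and $j\ge1$. When $\delta=1$, the quantity $\varepsilon-j+1$ lies in $\{-2,-1\}$, so the gap $\varepsilon-j+1+3$ lies in $\{1,2\}$.

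We also need to confirm that no high extremal point lies strictly between $n_e$ and $n_e^+$. On the linear run the gap $N-J_4(N)=n_e+1-j-3m$ exceeds $2$ for $m<m_0$, so no intermediate value qualifies.

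Part (c) follows from the gap formula. The gap is zero exactly when $\delta=0$ and $\varepsilon=j-1$, since $\delta=1$ forces a gap of at least $1$. Substituting $\varepsilon+1=j$ then yields $n_e^+=\bigl(4(n_e+1)-j\bigr)/3$.

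The main obstacle is bookkeeping rather than ideas. The delicate points are the exact wrap condition at the boundary of (a), the uniqueness of the single mod-$n$ subtraction at $n_e^++1$, and the use of the threshold $n_e\ge6$. The threshold guarantees $m_0\ge1$, so the run is nontrivial, and it ensures that the inequality $j+4m_0+3<2(n_e^++1)$ holds. We will check that inequality explicitly using $m_0\le (n_e+1-j)/3$.
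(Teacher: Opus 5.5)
Your argument is correct, and it takes a different route from the paper. The paper does not derive the lemma. Its proof specializes Theorem~5 of \cite{BCQC-JIS} to $k=4$: Lemma~4(b) there gives $m_0=q-\delta$, and equations~(10)--(11) there give the values at $n_e^+$ and $n_e^++1$. It then confirms $t\in\{0,1,2\}$ and part~(c) by listing the nine pairs $(\varepsilon,j)$ in Table~\ref{tab:type-range}.

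You instead work directly from the recursion $J_4(n)=(J_4(n-1)+3)\bmod n+1$:
\begin{itemize}
\item the no-wrap condition $3m\le n_e+1-j$ yields (a) with exactly the stated $m_0$;
\item the split $m_0=q-\delta$ yields the formula for $n_e^+$;
\item the single gap expression $n_e+1-j-3m$ shows both that the type of $n_e^+$ is $\varepsilon-(j-1)+3\delta\in\{0,1,2\}$ and that every intermediate point has gap at least $3$, so $n_e^+$ really is the next high extremal point;
\item one modular subtraction gives $J_4(n_e^++1)$.
\end{itemize}

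The citation buys brevity and keeps the lemma inside the general-$k$ framework of the earlier paper. Your derivation buys a self-contained proof that can be checked line by line. It also shows that the threshold $n_e\ge6$ does essentially no work. You attribute the single-wrap inequality $j+4m_0+3<2(n_e^++1)$ to the threshold, but it is equivalent to $2m_0<2n_e+1-j$, which follows from $m_0\le(n_e+1-j)/3$ for every $n_e\ge1$.

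The only external input left in your argument is $j\in\{1,2,3\}$, via Lemma~\ref{L:type-value}. That also follows from the same recursion in one line: $n_e+1\le J_4(n_e)+3=n_e+3-t<2(n_e+1)$ gives $J_4(n_e+1)=3-t$ whenever $n_e\ge2$. So your proof can be made entirely independent of \cite{BCQC-JIS}.
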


\begin{proof}
Everything here is the $k=4$ case of Theorem~5 of~\cite{BCQC-JIS}; we only rename
that paper's residue variable $r$ to $\varepsilon$.

\noindent {\rm (a)}
Taking $k=4$ in~\cite[Theorem~5(a)]{BCQC-JIS} gives
\[
J_4(n_e+1+m)=j+4m
\qquad\text{for all }m\in\bigl\{0,1,\dots,\lfloor(n_e-(j-1))/3\rfloor\bigr\},
\]
which is the claim with $m_0=\lfloor(n_e-(j-1))/3\rfloor$.

\noindent {\rm (b)}
Write $n_e=3q+\varepsilon$ with $q=\lfloor n_e/3\rfloor\ge2$, and abbreviate
$\delta:=\delta(\varepsilon,j)$. By~\cite[Lemma~4(b)]{BCQC-JIS} with $k=4$ we have
$m_0=q-\delta$, so the next high extremal point is
$n_e^+=n_e+1+m_0=4q+\varepsilon+1-\delta$. This is an integer: since
\[
4(n_e+1)-(\varepsilon+1)=4(3q+\varepsilon+1)-(\varepsilon+1)=12q+3\varepsilon+3
=3(4q+\varepsilon+1),
\]
the quotient $(4(n_e+1)-(\varepsilon+1))/3$ equals $4q+\varepsilon+1$, and
subtracting $\delta\in\{0,1\}$ keeps it integral.

The value of $J_4$ at $n_e^+$ comes from~\cite[equation~(10)]{BCQC-JIS} with $k=4$,
$r=\varepsilon$:
\[
J_4(n_e^+)=n_e^+-\delta(k-1)-\varepsilon+(j-1)=n_e^+-(3\delta+\varepsilon-(j-1)),
\]
and the value at $n_e^++1$ from~\cite[equation~(11)]{BCQC-JIS}:
\[
J_4(n_e^++1)=(1-\delta)(k-1)-\varepsilon+(j-1)=3(1-\delta)-(\varepsilon-(j-1)).
\]
The quantity $t:=3\delta+\varepsilon-(j-1)$ is then the type of $n_e^+$, and it
always lands in $\{0,1,2\}$: the pair $(\varepsilon,j)$ takes nine values, each
forcing $\delta$, and every resulting row of Table~\ref{tab:type-range} yields
$t\in\{0,1,2\}$.

\begin{table}[ht]
\centering
\small
\begin{tabular}{||c|c|c|c||}
\hline\hline
$\varepsilon$ & $j$ & $\delta$ & $t=3\delta+\varepsilon-(j-1)$ \\ \hline
$0$ & $1$ & $0$ & $0$ \\ \hline
$1$ & $1$ & $0$ & $1$ \\ \hline
$2$ & $1$ & $0$ & $2$ \\ \hline
$0$ & $2$ & $1$ & $2$ \\ \hline
$1$ & $2$ & $0$ & $0$ \\ \hline
$2$ & $2$ & $0$ & $1$ \\ \hline
$0$ & $3$ & $1$ & $1$ \\ \hline
$1$ & $3$ & $1$ & $2$ \\ \hline
$2$ & $3$ & $0$ & $0$ \\ \hline\hline
\end{tabular}
\caption{All valid triples $(\varepsilon,j,\delta)$ for $J_4$ and the resulting
type $t$ of $n_e^+$.}
\label{tab:type-range}
\end{table}

\noindent {\rm (c)}
The point $n_e^+$ is fixed exactly when $t=0$. Looking at
Table~\ref{tab:type-range}, $t=0$ happens only in the rows with $\delta=0$ and
$\varepsilon=j-1$ (every row with $\delta=1$ gives $t\in\{1,2\}$). When
$\varepsilon=j-1$ the definition of $\delta$ forces $\delta=0$, and then
\[
n_e^+=\frac{4(n_e+1)-(\varepsilon+1)}{3}=\frac{4(n_e+1)-j}{3}.
\]
Conversely $\varepsilon=j-1$ gives $\delta=0$ and $t=0$, hence $J_4(n_e^+)=n_e^+$;
this is~\cite[Theorem~5(c)]{BCQC-JIS} with $k=4$.
\end{proof}

Table~\ref{tab:type-range} compresses into a single congruence. We record it
here because it answers, in one line each, three questions that recur below:
when a run of pure points of one type continues, when it switches type, and
when it ends at a fixed point.

\begin{corollary}[Successor type]\label{C:successor}
Let $n_e\ge6$ be a high extremal point of $J_4$ of type $t$, and let $n_e^{+}$
be the next high extremal point. Then the type of $n_e^{+}$ is the unique
element of $\{0,1,2\}$ satisfying
\[
\operatorname{type}(n_e^{+})\equiv n_e+t+1 \pmod 3 .
\]
In particular, $n_e^{+}$ has the same type as $n_e$ if and only if
$n_e\equiv2$ (mod $3$), and $n_e^{+}$ is a fixed point if and only if
$n_e\equiv-t-1$ (mod $3$).
\end{corollary}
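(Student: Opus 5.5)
The plan is to read the claim off Lemma~\ref{L:J4_engine}(b) and Table~\ref{tab:type-range}, after translating $j$ into the type $t$ of $n_e$. By the type-value identity (Lemma~\ref{L:type-value}), $j=J_4(n_e+1)=3-t$, so $j-1=2-t$. Lemma~\ref{L:J4_engine}(b) gives the type of $n_e^{+}$ as
\[
\operatorname{type}(n_e^{+})=3\delta(\varepsilon,j)+\varepsilon-(j-1),
\]
where $\varepsilon\equiv n_e\pmod 3$. Reducing modulo $3$ removes the $\delta$ term, so
\[
\operatorname{type}(n_e^{+})\equiv \varepsilon-(2-t)\equiv n_e+t-2\equiv n_e+t+1\pmod 3 .
\]
Lemma~\ref{L:J4_engine}(b) already places this value in $\{0,1,2\}$, as recorded in Table~\ref{tab:type-range}. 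It is therefore the unique representative of its residue class in $\{0,1,2\}$, which proves the main assertion.

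For the two \emph{in particular} statements I would argue directly from this congruence. The type of $n_e^{+}$ equals $t$ exactly when $n_e+t+1\equiv t\pmod 3$. This holds if and only if $n_e\equiv -1\equiv 2\pmod 3$, since equality of the two types in $\{0,1,2\}$ is equivalent to congruence modulo $3$. Likewise, $n_e^{+}$ is a fixed point exactly when its type is $0$, which happens if and only if $n_e+t+1\equiv 0$, that is, $n_e\equiv -t-1\pmod 3$. As a consistency check, this agrees with Lemma~\ref{L:J4_engine}(c): the condition $\varepsilon=j-1=2-t$ is the same as $n_e\equiv 2-t\equiv -t-1\pmod 3$.

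The argument has no real obstacle. The only step needing care is the claim that the type lies in $\{0,1,2\}$ rather than merely being congruent to the right residue. That range check is the nine-row case analysis already carried out in Table~\ref{tab:type-range}, which I would cite rather than redo. One should also note that the hypothesis $n_e\ge 6$ is needed for both invoked lemmas to apply.
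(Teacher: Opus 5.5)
Your proposal is correct and is essentially the paper's own proof. You substitute $j=3-t$ from the type-value identity into the type formula $3\delta+\varepsilon-(j-1)$ of Lemma~\ref{L:J4_engine}(b), reduce modulo $3$, and use Table~\ref{tab:type-range} to place the value in $\{0,1,2\}$; the two special cases then come from solving the congruence for types $t$ and $0$, exactly as the paper does.
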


\begin{proof}
The type-value identity (Lemma~\ref{L:type-value}) gives $j=3-t$, hence
$j-1=2-t$, and Lemma~\ref{L:J4_engine}(b) gives
\[
\operatorname{type}(n_e^{+})=3\delta+\varepsilon-(j-1)
\equiv \varepsilon+t+1\equiv n_e+t+1 \pmod 3 .
\]
The type lies in $\{0,1,2\}$ (Table~\ref{tab:type-range}), so the congruence
determines it. The two special cases are the congruence solved for
$\operatorname{type}(n_e^{+})=t$ and for $\operatorname{type}(n_e^{+})=0$.
\end{proof}

Lemma~\ref{L:J4_engine} does the heavy lifting throughout: from the residue
$\varepsilon$ and the value $j$ it locates the next high extremal point and names its
type, and every later construction is an application of it. The one quantity it asks
us to compute is how far the slope-$4$ segment runs before it would cross the
diagonal, and that is a single floor. We isolate that arithmetic step so we can cite
it cleanly.

\begin{lemma}[Max-floor step for $J_4$]\label{L:maxfloorJ4}
Let $A\ge 2$ be an integer and let $\alpha\in\{1,2,3\}$. Consider the inequality
\begin{equation}\label{eq:maxfloor}
\alpha+4x \le A+1+x,
\end{equation}
where $x$ is a nonnegative integer. Then the largest integer $x$ satisfying
\eqref{eq:maxfloor} is $x_0=\lfloor(A+1-\alpha)/3\rfloor$, and \eqref{eq:maxfloor}
holds precisely for $x\in\{0,1,\dots,x_0\}$.
\end{lemma}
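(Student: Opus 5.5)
The inequality is linear in $x$, so I will simply rearrange it and apply the defining property of the floor function. Subtracting $x+\alpha$ from both sides of \eqref{eq:maxfloor} shows that, for any integer $x$, the inequality is equivalent to
\[
3x\le A+1-\alpha .
\]
Since $3>0$, this is in turn equivalent to $x\le (A+1-\alpha)/3$. For an integer $x$ and a real number $y$, we have $x\le y$ if and only if $x\le\lfloor y\rfloor$. Hence the integer solutions of \eqref{eq:maxfloor} are exactly the integers $x\le x_0$, where $x_0=\lfloor(A+1-\alpha)/3\rfloor$.

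It remains to intersect this with the nonnegative integers and check that the resulting set $\{0,1,\dots,x_0\}$ is nonempty, so that a largest element exists and equals $x_0$. Here the hypotheses $A\ge2$ and $\alpha\le3$ enter: they give $A+1-\alpha\ge 0$, so $x_0\ge0$. Equivalently, $x=0$ satisfies \eqref{eq:maxfloor}, since $\alpha\le 3\le A+1$. So the solution set is the interval $\{0,1,\dots,x_0\}$ of nonnegative integers, and its maximum is $x_0$, as claimed.

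There is no real obstacle. The only point needing care is the nonemptiness check, which is exactly where the lower bound $A\ge2$ is used. I would also remark that this is the computation behind $m_0$ in Lemma~\ref{L:J4_engine}(a). Taking $A=n_e$ and $\alpha=j$, the quantity $x_0$ becomes $\lfloor (n_e+1-j)/3\rfloor=\lfloor (n_e-(j-1))/3\rfloor$. Condition \eqref{eq:maxfloor} says that the slope-$4$ value $j+4m$ does not exceed the circle size $n_e+1+m$, so the lemma can be cited whenever that floor is computed later.
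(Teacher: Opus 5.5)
Your proof is correct and matches the paper's argument: rewrite the inequality as $3x\le A+1-\alpha$, use $A\ge2$ and $\alpha\le3$ to see the right side is nonnegative (so $x=0$ works), and read off the largest solution as $\lfloor(A+1-\alpha)/3\rfloor$. Your explicit step that $x\le y$ iff $x\le\lfloor y\rfloor$ for integer $x$ is the one detail the paper leaves implicit.
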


\begin{proof}
Inequality \eqref{eq:maxfloor} is the same as $3x\le A+1-\alpha$. Because $A\ge2$ and
$\alpha\le3$, the right side is at least $0$, so $x=0$ works and the largest
admissible $x$ is $x_0=\lfloor(A+1-\alpha)/3\rfloor$. Every integer between $0$ and
$x_0$ then satisfies the inequality, which gives the solution set $\{0,1,\dots,x_0\}$.
\end{proof}

\begin{remark}[The engine step]\label{R:maxfloor}
Lemmas~\ref{L:J4_engine} and~\ref{L:maxfloorJ4} combine into one move that the
later proofs repeat. Let $n_e\ge6$ be a high extremal point of type $t$, and
set $\alpha:=J_4(n_e+1)=3-t$ (Lemma~\ref{L:type-value}). With
$x_0=\lfloor(n_e+1-\alpha)/3\rfloor$ from Lemma~\ref{L:maxfloorJ4}, the next
high extremal point and the value of $J_4$ there are
\[
n_e^{+}=n_e+1+x_0,
\qquad
J_4(n_e^{+})=\alpha+4x_0,
\]
by Lemma~\ref{L:J4_engine}(a)-(b), and no high extremal point lies strictly
between $n_e$ and $n_e^{+}$. We call this single application the engine step.
\end{remark}

\begin{lemma}[Strict growth and finiteness of the extremal sequence]\label{L:monotone}
Let $n_e\ge6$ be a high extremal point of $J_4$ and let $n_e^{+}$ be the next high
extremal point, as in Lemma~\ref{L:J4_engine}. Then
\begin{equation}\label{eq:growth}
n_e^{+}-n_e\ \ge\ \frac{n_e-2}{3}\ >\ 0 .
\end{equation}
Consequently, the sequence of high extremal points $\ge6$ is strictly increasing,
and between any two consecutive fixed points $n_p^{(\ell)}<n_p^{(\ell+1)}$ there are
only finitely many high extremal points; in particular only finitely many pure
points.
\end{lemma}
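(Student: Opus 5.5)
The plan is to derive the growth bound \eqref{eq:growth} directly from the engine step of Remark~\ref{R:maxfloor} and then deduce the two consequences formally.

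Let $t$ be the type of $n_e$ and $\alpha=J_4(n_e+1)=3-t\in\{1,2,3\}$ (Lemma~\ref{L:type-value}). The engine step gives
\[
n_e^{+}=n_e+1+x_0,\qquad x_0=\left\lfloor\frac{n_e+1-\alpha}{3}\right\rfloor .
\]
Using $\lfloor y\rfloor\ge y-\tfrac{2}{3}$ for $y\in\tfrac13\mathbb Z$, we get $x_0\ge (n_e+1-\alpha)/3-2/3\ge (n_e-4)/3$ in the worst case $\alpha=3$. Therefore
\[
n_e^{+}-n_e=1+x_0\ge 1+\frac{n_e-4}{3}=\frac{n_e-1}{3}\ge\frac{n_e-2}{3}.
\]
The right-hand side is positive since $n_e\ge6$. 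I would double-check this bound against the closed form of Lemma~\ref{L:J4_engine}(b), which gives $n_e^{+}=4q+\varepsilon+1-\delta$ with $n_e=3q+\varepsilon$. This yields $n_e^{+}-n_e=q+1-\delta\ge q\ge (n_e-2)/3$, which is the stated inequality exactly and in a cleaner form. I would present this second derivation as the main argument.

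For strict monotonicity: every high extremal point $\ge6$ has a successor $n_e^{+}>n_e$, and there is no high extremal point strictly in between (Remark~\ref{R:maxfloor}). So the high extremal points $\ge6$, listed in order, are obtained by iterating $n_e\mapsto n_e^{+}$, and this sequence is strictly increasing. Since the high extremal points are integers, only finitely many of them lie in the bounded interval $(n_p^{(\ell)},n_p^{(\ell+1)})$, and the pure points are among them. In fact this finiteness only needs the integrality; the growth bound additionally shows geometric growth by a factor of at least $4/3-o(1)$.

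The only delicate point is the base case $n_p^{(1)}=1<6$. The interval $(1,21)$ is still a bounded set of integers, so finiteness holds trivially there as well. No genuine obstacle is expected; the main care is in keeping track of the lower bound $q\ge(n_e-2)/3$ when $\varepsilon=2$ and $\delta=1$.
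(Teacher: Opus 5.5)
Your proposal is correct and is essentially the paper's argument: the paper also bounds $n_e^{+}=\frac{4(n_e+1)-(\varepsilon+1)}{3}-\delta$ from Lemma~\ref{L:J4_engine}(b) using $\varepsilon\le2$ and $\delta\le1$, which is the same computation as your $q+1-\delta\ge q\ge (n_e-2)/3$, and it likewise gets finiteness from the points being distinct integers in a bounded interval. Your engine-step variant, with the sharper bound $(n_e-1)/3$, is also valid; it is sharper because $\delta=1$ forces $\varepsilon\le1$, so the worst case $\varepsilon=2$, $\delta=1$ you flag never actually occurs.
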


\begin{proof}
Set $\varepsilon\equiv n_e$ (mod $3$) with $\varepsilon\in\{0,1,2\}$,
$j:=J_4(n_e+1)\in\{1,2,3\}$, and $\delta:=\delta(\varepsilon,j)\in\{0,1\}$. By
Lemma~\ref{L:J4_engine}(b),
\[
n_e^{+}=\frac{4(n_e+1)-(\varepsilon+1)}{3}-\delta .
\]
Both $\varepsilon$ and $\delta$ are at most their largest values, so
\[
n_e^{+}\ \ge\ \frac{4(n_e+1)-3}{3}-1\ =\ \frac{4n_e-2}{3},
\qquad\text{whence}\qquad
n_e^{+}-n_e\ \ge\ \frac{4n_e-2}{3}-n_e\ =\ \frac{n_e-2}{3}.
\]
For $n_e\ge6$ the bound is strictly positive, so consecutive high extremal
points (from $6$ on) increase. Finiteness follows: the high extremal points
strictly between $n_p^{(\ell)}$ and $n_p^{(\ell+1)}$ are distinct integers in
the finite set $\{n_p^{(\ell)}+1,\dots,n_p^{(\ell+1)}-1\}$, so there are
finitely many of them, and in particular finitely many pure points.
\end{proof}

\begin{remark}[Notation for high extremal points]\label{R:notation-hep}
The symbol $n_e^{+}$ in Lemma~\ref{L:J4_engine} denotes the next high extremal
point following $n_e$; it reappears in Corollary~\ref{C:successor},
Lemma~\ref{L:monotone}, Lemma~\ref{L:segment}, and the proof of
Proposition~\ref{P:term-rule}. For intermediate high extremal points arising in
inductive and block-based arguments, we use the indexed notation
$n_e^{(\ell,q)}$ or $n_e^{(q)}$, depending on context.
\end{remark}

Lemma~\ref{L:monotone} guarantees that the block structure between two fixed
points is finite, which we rely on when assembling the global picture in
Subsection~\ref{sub:main}. We can now describe the first block after a fixed
point. The statement splits by the ternarity of $n_p^{(\ell)}$, and the reason
is structural: the value of $J_4$ just after a fixed point is always $3$ (the
type-value identity with $t=0$), so of the two inputs $(\varepsilon,j)$ the
engine of Lemma~\ref{L:J4_engine} consumes, only the residue
$\varepsilon$ varies, and it alone decides what comes next.

\begin{proposition}[First block after a fixed point]\label{P:initial-block}
Let $n_p^{(\ell)}$ be a fixed point of $J_4$ with $n_p^{(\ell)}\ge 3$.
\begin{enumerate}[label=\arabic*., leftmargin=2.2em]

\item \textbf{Ternarity $2$ (direct transition).}
If $n_p^{(\ell)}\equiv 2$ (mod $3$), then there are no pure points between
$n_p^{(\ell)}$ and $n_p^{(\ell+1)}$, and
\[
n_p^{(\ell+1)}=\frac{4n_p^{(\ell)}+1}{3}.
\]

\item \textbf{Ternarity $1$ (initial block of type-$2$).}
If $n_p^{(\ell)}\equiv 1$ (mod $3$), let $m_2:=\nu_3(4n_p^{(\ell)}+2)$. Then
$m_2\ge1$, the first $m_2$ pure points after $n_p^{(\ell)}$ are all of
type-$2$, the next high extremal point is not, and the last point of this
initial block is
\[
n_e^{(\ell,1)}=\frac{4^{m_2-1}(4n_p^{(\ell)}+2)}{3^{m_2}}-1.
\]

\item \textbf{Ternarity $0$ (initial block of type-$1$).}
If $n_p^{(\ell)}\equiv 0$ (mod $3$), let $m_1:=\nu_3(4n_p^{(\ell)}+3)$. Then
$m_1\ge1$, the first $m_1$ pure points after $n_p^{(\ell)}$ are all of
type-$1$, the next high extremal point is not, and the last point of this
initial block is
\[
n_e^{(\ell,1)}=\frac{4^{m_1-1}(4n_p^{(\ell)}+3)}{3^{m_1}}-1.
\]

\end{enumerate}
\end{proposition}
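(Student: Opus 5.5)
The plan is to run the engine step of Remark~\ref{R:maxfloor} (equivalently Lemma~\ref{L:J4_engine}) starting from $n_p^{(\ell)}$, and then to iterate it along a run of same-type pure points. The bookkeeping is done in the shifted variable $x:=n_e+1$.

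First, a threshold remark. The fixed points listed before the statement are $1,21,38,\dots$, so the hypothesis $n_p^{(\ell)}\ge3$ forces $n_p^{(\ell)}\ge21\ge6$, and Lemma~\ref{L:J4_engine} applies. Since $n_p^{(\ell)}$ has type $0$, Lemma~\ref{L:type-value} gives $j=J_4(n_p^{(\ell)}+1)=3$. The three cases are then read off the last three rows of Table~\ref{tab:type-range}.

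\textbf{Ternarity $2$.} For $\varepsilon=2$ we have $\varepsilon=j-1$. Lemma~\ref{L:J4_engine}(c) shows that the very next high extremal point is fixed and equals $(4(n_p^{(\ell)}+1)-3)/3=(4n_p^{(\ell)}+1)/3$. By Lemma~\ref{L:monotone} no high extremal point lies strictly between, so this is $n_p^{(\ell+1)}$ and there are no pure points.

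\textbf{Ternarity $1$.} For $\varepsilon=1$ and $j=3$ we get $\delta=1$, so the next point $n_1$ has type $2$. Lemma~\ref{L:J4_engine}(b) gives
\[
n_1=\frac{4n_p^{(\ell)}+2}{3}-1,\qquad\text{so}\qquad x_1=\frac{4n_p^{(\ell)}+2}{3}.
\]
Note that $3\mid 4n_p^{(\ell)}+2$, so $m_2\ge1$.

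Now let $n_i$ be a type-$2$ pure point. By Corollary~\ref{C:successor}, its successor has type $2$ exactly when $n_i\equiv2\pmod 3$, that is, when $3\mid x_i$. In that case $j=1$, $\varepsilon=2$ and $\delta=0$, and Lemma~\ref{L:J4_engine}(b) gives $n_{i+1}=(4(n_i+1)-3)/3$. This says
\[
x_{i+1}=\frac{4x_i}{3}.
\]
By induction, as long as the run continues we have $x_i=4^{i-1}(4n_p^{(\ell)}+2)/3^{i}$. Since $\gcd(4,3)=1$, this gives $\nu_3(x_i)=m_2-i$. Hence $3\mid x_i$ precisely for $i<m_2$. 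The run therefore consists of exactly $n_1,\dots,n_{m_2}$, and the successor of $n_{m_2}$ is not of type $2$. The last point is
\[
n_{m_2}=\frac{4^{m_2-1}(4n_p^{(\ell)}+2)}{3^{m_2}}-1,
\]
as claimed. These points are pure because their type is $2\neq0$, so none of them is the next fixed point. Since the high extremal points strictly increase (Lemma~\ref{L:monotone}), they are indeed the first $m_2$ pure points of the interval.

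\textbf{Ternarity $0$.} This case is identical. Here $\varepsilon=0<2=j-1$ gives $\delta=1$, so the first point has type $1$ and $x_1=(4n_p^{(\ell)}+3)/3$, with $m_1\ge1$. For a type-$1$ point with $n_i\equiv2\pmod 3$ we have $j=2$, $\varepsilon=2$ and $\delta=0$, which again yields $x_{i+1}=4x_i/3$. The same valuation count then gives the length $m_1$ and the stated last point.

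The only delicate step is the stopping criterion. One must check that "the successor has the same type" is equivalent to $3\mid x_i$, and then that the $3$-adic valuation of $x_i$ drops by exactly one per step, because $4$ is a unit modulo $3$. Everything else is direct substitution into Lemma~\ref{L:J4_engine}.
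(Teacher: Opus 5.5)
Your proof is correct and is essentially the paper's argument: your shifted variable $x_i=n_i+1$ is exactly the paper's $C_q$ (resp.\ $D_q$), and your stopping test ``$3\mid x_i$'' via Corollary~\ref{C:successor} together with the drop $\nu_3(x_i)=m_2-i$ is the paper's stopping argument; the only cosmetic difference is that you substitute into the closed successor formula of Lemma~\ref{L:J4_engine}(b) with $\varepsilon=2$, $\delta=0$, whereas the paper recomputes each floor through Lemma~\ref{L:maxfloorJ4}. Your remark that $n_p^{(\ell)}\ge3$ forces $n_p^{(\ell)}\ge21$ usefully secures the threshold $n_e\ge6$ that the engine needs; in the ternarity-$2$ case, however, the absence of intermediate high extremal points comes from the meaning of ``next'' in Lemma~\ref{L:J4_engine}(b), not from Lemma~\ref{L:monotone}.
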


\begin{proof}
The condition $n_p^{(\ell)}\ge3$ ensures $J_4(n_p^{(\ell)}+1)=3$
(see~\cite[Proposition~2(a)]{BCQC-JIS}), which excludes only $n_p^{(1)}=1$. We
treat the three ternarity classes separately.

\medskip\noindent
\textbf{(1) Ternarity $2$.}
Set $n_e:=n_p^{(\ell)}$. Since $n_e\equiv2$ (mod $3$), we have $\varepsilon=2$ and
$j:=J_4(n_e+1)=3$, giving $\varepsilon=j-1$. By Lemma~\ref{L:J4_engine}(c), the
next high extremal point is a fixed point coinciding with $n_p^{(\ell+1)}$, so there
are no pure points in the interval, and
\[
n_p^{(\ell+1)}=\frac{4(n_e+1)-j}{3}=\frac{4n_p^{(\ell)}+1}{3}.
\]

\medskip\noindent
\textbf{(2) Ternarity $1$.}
Write $n_p^{(\ell)}=3s_1+1$ with $s_1\ge1$. Then $4n_p^{(\ell)}+2=3(4s_1+2)$ is a
multiple of $3$, so $m_2\ge1$.

Start with the first step. With $j=3$, Lemma~\ref{L:J4_engine}(a) gives
$J_4(n_p^{(\ell)}+1+m)=3+4m$ for $m\in\{0,\dots,x_1\}$, where $x_1$ is the largest
integer with $3+4x_1\le n_p^{(\ell)}+1+x_1$. Lemma~\ref{L:maxfloorJ4} (with
$A=n_p^{(\ell)}$, $\alpha=3$) gives
\[
x_1=\left\lfloor\frac{n_p^{(\ell)}-2}{3}\right\rfloor
=\left\lfloor\frac{3s_1-1}{3}\right\rfloor=s_1-1,
\]
so the engine step (Remark~\ref{R:maxfloor}) lands the first high extremal
point after $n_p^{(\ell)}$ at
$n_e^{(1)}=n_p^{(\ell)}+1+x_1=4s_1+1$, with
$J_4(n_e^{(1)})=3+4x_1=4s_1-1=n_e^{(1)}-2$. Thus $n_e^{(1)}$ is of type-$2$.

Now iterate. Put $C_q:=4^{q-1}(4n_p^{(\ell)}+2)/3^q$ for $1\le q\le m_2$; this is a
positive integer, since $3^q$ divides $4n_p^{(\ell)}+2$ for such $q$ while
$\gcd(4^{q-1},3)=1$. We claim, by induction on $q$, that
\begin{equation}\label{eq:ne-type2}
n_e^{(q)}=C_q-1\ \text{ is a type-$2$ pure point},\qquad 1\le q\le m_2.
\end{equation}
The base case $q=1$ is the computation above: $C_1=(4n_p^{(\ell)}+2)/3=4s_1+2$, so
$C_1-1=4s_1+1=n_e^{(1)}$.

Suppose \eqref{eq:ne-type2} holds for some $q<m_2$. Then $3^{q+1}$ divides
$4n_p^{(\ell)}+2$, so $3\mid C_q$; write $C_q=3K$. Because $n_e^{(q)}$ is of type-$2$,
the type-value identity (Lemma~\ref{L:type-value}) gives $J_4(n_e^{(q)}+1)=1$, and
Lemma~\ref{L:maxfloorJ4} (with $A=n_e^{(q)}$, $\alpha=1$) gives
\[
x_{q+1}=\left\lfloor\frac{n_e^{(q)}}{3}\right\rfloor
=\left\lfloor K-\frac{1}{3}\right\rfloor=K-1=\frac{C_q}{3}-1.
\]
The engine step then gives
\[
n_e^{(q+1)}=n_e^{(q)}+1+x_{q+1}
=(C_q-1)+1+\Bigl(\frac{C_q}{3}-1\Bigr)=\frac{4C_q}{3}-1=C_{q+1}-1,
\]
and $J_4(n_e^{(q+1)})=1+4x_{q+1}=4C_q/3-3=n_e^{(q+1)}-2$, so $n_e^{(q+1)}$ is
again of type-$2$. This completes the induction.

The run stops at $q=m_2$, and Corollary~\ref{C:successor} shows why. Each
point of the run satisfies $n_e^{(q)}=C_q-1\equiv C_q+2$ (mod $3$), so the high
extremal point after $n_e^{(q)}$ is again of type-$2$ exactly when
$n_e^{(q)}\equiv2$ (mod $3$), that is, exactly when $3\mid C_q$. For $q<m_2$
this holds, since $3^{q+1}$ divides $4n_p^{(\ell)}+2$; at $q=m_2$ it fails,
since $\nu_3(4n_p^{(\ell)}+2)=m_2$ leaves $3\nmid C_{m_2}$. So the point after
$n_e^{(m_2)}$ is not of type-$2$ and the run ends there. The last type-$2$
point is therefore
\[
n_e^{(\ell,1)}=n_e^{(m_2)}=C_{m_2}-1
=\frac{4^{m_2-1}(4n_p^{(\ell)}+2)}{3^{m_2}}-1.
\]

Table~\ref{T02} records what happens at the second candidate for the three residue
classes of $s_1$ modulo $3$: only $s_1=3s_2+1$ (that is, $3^2\mid(4n_p^{(\ell)}+2)$)
produces another type-$2$ point, while the other two classes give a type-$1$ point or
a fixed point.

\begin{table}[ht]
\centering
\begin{tabular}{||c|c|c|c|c||}
\hline\hline
$s_1$ & $x_2$ & $n_e^{(2)}$ & $J_4(n_e^{(2)})$ & Type of $n_e^{(2)}$ \\ \hline
$3s_2$   & $4s_2$   & $16s_2+2$  & $16s_2+1$  & $1$ \\ \hline
$3s_2+1$ & $4s_2+1$ & $16s_2+7$  & $16s_2+5$  & $2$ \\ \hline
$3s_2+2$ & $4s_2+3$ & $16s_2+13$ & $16s_2+13$ & $0$ \\ \hline\hline
\end{tabular}
\caption{Residue classes of $s_1$ modulo $3$ and the type of the second extremal
candidate after $n_e^{(1)}$. A type-$2$ successor arises only when
$s_1=3s_2+1$, equivalently $3^2\mid(4n_p^{(\ell)}+2)$.}
\label{T02}
\end{table}

\medskip\noindent
\textbf{(3) Ternarity $0$.}
Write $n_p^{(\ell)}=3s_1$ with $s_1\ge1$. Since $4n_p^{(\ell)}+3=3(4s_1+1)$, we
have $3\mid(4n_p^{(\ell)}+3)$, so $m_1\ge1$. The argument runs parallel to
case~(2), with $D_q:=4^{q-1}(4n_p^{(\ell)}+3)/3^q$ in place of $C_q$ and with
$\alpha=2$ in place of $\alpha=1$ at each inductive step, because the
type-value identity gives $J_4(n_e^{(q)}+1)=2$ after a type-$1$ point. We
claim that $n_e^{(q)}=D_q-1$ is a type-$1$ pure point for $1\le q\le m_1$.

The first engine step starts at the fixed point itself, where $j=3$:
Lemma~\ref{L:maxfloorJ4} (with $A=3s_1$, $\alpha=3$) gives
$x_1=\lfloor(3s_1-2)/3\rfloor=s_1-1$, so $n_e^{(1)}=3s_1+1+x_1=4s_1=D_1-1$,
with $J_4(n_e^{(1)})=3+4x_1=4s_1-1=n_e^{(1)}-1$: a type-$1$ point, and indeed
$D_1=(4n_p^{(\ell)}+3)/3=4s_1+1$.

For the inductive step let $q<m_1$, so $3^{q+1}$ divides $4n_p^{(\ell)}+3$ and
$3\mid D_q$; write $D_q=3L$. The engine step from the type-$1$ point
$n_e^{(q)}=3L-1$ uses $\alpha=2$ and gives
\[
x_{q+1}=\left\lfloor\frac{n_e^{(q)}-1}{3}\right\rfloor
=\left\lfloor L-\frac{2}{3}\right\rfloor=L-1,
\qquad
n_e^{(q+1)}=n_e^{(q)}+1+x_{q+1}=4L-1=D_{q+1}-1,
\]
with $J_4(n_e^{(q+1)})=2+4x_{q+1}=4L-2=n_e^{(q+1)}-1$, again of type-$1$.

The run stops at $q=m_1$ for the same reason as in case~(2): each
$n_e^{(q)}\equiv D_q+2$ (mod $3$), so by Corollary~\ref{C:successor} the run
continues past $n_e^{(q)}$ exactly when $3\mid D_q$, and this fails first at
$q=m_1$. The last type-$1$ point is
\[
n_e^{(\ell,1)}=D_{m_1}-1=\frac{4^{m_1-1}(4n_p^{(\ell)}+3)}{3^{m_1}}-1.\qedhere
\]
\end{proof}

\begin{example}[Ternarity $2$: the fixed point $n_p^{(3)}=38$]\label{Ex:ternarity2_38}
Since $38\equiv2$ (mod $3$), Proposition~\ref{P:initial-block}(1) applies: there are no
pure points between $n_p^{(3)}=38$ and $n_p^{(4)}$, and
\[
n_p^{(4)}=\frac{4\cdot38+1}{3}=\frac{153}{3}=51,
\]
in agreement with Table~\ref{tab:J4_fixed_points}.
\end{example}

\begin{example}[Ternarity $1$: the fixed point $n_p^{(11)}=5{,}167$]
\label{Ex:ternarity1_5167}
Since $5{,}167\equiv1$ (mod $3$), Proposition~\ref{P:initial-block}(2) applies. We have
\[
4n_p^{(11)}+2=4\cdot5{,}167+2=20{,}670=3\cdot6{,}890,
\]
and since $6{,}890=2\cdot5\cdot13\cdot53$ is not divisible by $3$, we get
$m_2=\nu_3(20{,}670)=1$. Therefore immediately after $5{,}167$ there is exactly
one pure point of type-$2$, namely
\[
n_e^{(11,1)}=\frac{20{,}670}{3}-1=6{,}890-1=6{,}889.
\]
\end{example}

\begin{example}[Ternarity $0$: the fixed point $n_p^{(15)}=290{,}022$]
\label{Ex:ternarity0_290022}
Since $290{,}022\equiv0$ (mod $3$), Proposition~\ref{P:initial-block}(3) applies. We
have
\[
4n_p^{(15)}+3=4\cdot290{,}022+3=1{,}160{,}091=3^2\cdot128{,}899,
\]
and since the digit sum $1+2+8+8+9+9=37$ is not divisible by $3$, we get
$m_1=\nu_3(1{,}160{,}091)=2$. Therefore immediately after $290{,}022$ there are
exactly two consecutive pure points of type-$1$, with last point
\[
n_e^{(15,1)}=\frac{4\cdot1{,}160{,}091}{9}-1=515{,}596-1=515{,}595.
\]
\end{example}

Proposition~\ref{P:initial-block} settles the first block: from the ternarity of
$n_p^{(\ell)}$ and one $3$-adic valuation we read off its type, its length, and its
last point. It says nothing, though, about what comes after. The terminal point of
the first block is some pure point $n_e$, and to continue we need to know how
many pure points of the opposite type follow it. The next section answers
exactly this question. The mechanism we use looks different in character from
the one above: instead of testing a single fixed expression, we test a moving
family of expressions whose offsets grow with the depth of the block. The
difference turns out to be cosmetic; at the end of the section the moving
family folds back into a single valuation (Corollary~\ref{C:meters-val}), but
the floors are what carry the proof.

\section{Transition meters and local divisibility structure}\label{sec:meters}

Proposition~\ref{P:initial-block} produced the first block after a fixed point. To
continue across the interval we need the rule that takes the last point of one block
and returns the length of the next. That rule is the subject of this section, and it
is purely local: the next block depends only on the last point of the current
one, through a divisibility test on that point. We package the test in two
offset sequences $(A_t)$ and $(B_t)$ and read the block length off the highest
power of~$3$ that passes (Figures~\ref{fig:A-diagram}
and~\ref{fig:B-diagram}). The section ends with a payoff: the whole apparatus
compresses to one $3$-adic valuation per block
(Corollary~\ref{C:meters-val}).

We first name the quantity we are after.

\begin{definition}[Transition meters]\label{D:meters}
For a pure point $n_e$ of $J_4$:
\begin{enumerate}[label=\textbf{(\alph*)}, leftmargin=2.2em]
\item [ {\rm (a)}] if $n_e$ is of type-$1$, let $\mu_{1\to 2}(n_e)$ be the number
of type-$2$ pure points immediately following $n_e$ in the sequence of high
extremal points, that is, the length of the maximal type-$2$ run beginning at
the successor of $n_e$;
\item [ {\rm (b)}] if $n_e$ is of type-$2$, let $\mu_{2\to 1}(n_e)$ be defined in
the same way with the types interchanged.
\end{enumerate}
Both take values in $\mathbb{Z}_{\ge 0}$. A value of $0$ means that no
opposite-type point directly follows $n_e$: the high extremal point after
$n_e$ is then either of the same type, when $n_e$ sits in the middle of its
block, or the next fixed point, when $n_e$ closes the last block of its
interval.
\end{definition}

\begin{remark}[How the test differs from the first block]\label{R:variable-divisibility}
In Proposition~\ref{P:initial-block} the length of the first block came from the
$3$-adic valuation of one fixed number, $4n_p^{(\ell)}+2$ or $4n_p^{(\ell)}+3$. The
test here is different: it is applied not to a single number but to a moving family
$n_e-A_t$ (or $n_e-B_t$), one offset per floor $t$. We climb the floors one at a time,
checking at each floor whether a prescribed power of~$3$ still divides $n_e-A_t$; the
first floor where it fails sets the block length.

The two diagrams, Figures~\ref{fig:A-diagram} and~\ref{fig:B-diagram}, lay this out.
Each floor $t\ge1$ carries two divisibility requirements, shown as the left and right
columns. The $(A_t)$ diagram has in addition a basement, floor~$0$, holding the single
entry condition $3\mid n_e$; the $(B_t)$ diagram has no basement and starts at
floor~$1$. The shaded cell marks the first requirement that fails: a failure in the
left column is the ``odd'' case, one in the right column the ``even'' case, and
the floor at which it happens is the terminal index that the meter formulas
below convert into a block length. The moving family is less alien than it
looks: Corollary~\ref{C:meters-val} at the end of the section folds the whole
tower back into the valuation of a single linear form, recovering the shape of
the first-block test. The floors are how that gets proved.
\end{remark}

\medskip
\noindent
\textbf{Offsets for the type-$1$ to type-$2$ transition.}

For every nonnegative integer $t$, define the offset sequence $(A_t)$ by
\[
A_0 = 0, \qquad
A_t = 9A_{t-1} + 6 \quad (t \ge 1).
\]
Two properties carry all the later arguments: the gap between consecutive
offsets,
\[
A_t - A_{t-1} = 2\cdot 3^{\,2t-1} \qquad (t\ge1),
\]
and the closed form $A_t=\tfrac34(9^t-1)$, obtained by solving the recurrence,
which gives the bound $A_t<3^{2t}$ for every $t\ge0$.

\begin{figure}[ht]
\centering
\renewcommand{\arraystretch}{1.35}

\begin{tabular}{cc}
\textbf{Odd case} & \textbf{Even case} \\[4pt]

\begin{tabular}{|>{\centering\arraybackslash}m{3.2cm}|>{\centering\arraybackslash}m{3.2cm}|}
\hline
\cellcolor{gray!55}$3^{2t}\nmid (n_e-A_t)$ & \\
\hline
$3^{2t-2}\mid (n_e-A_{t-1})$ & $3^{2t-1}\mid (n_e-A_{t-1})$\\
\hline
$\vdots$ & $\vdots$\\
\hline
$3^{2}\mid (n_e-A_{1})$ & $3^{3}\mid (n_e-A_{1})$\\
\hline\hline
\multicolumn{2}{|c|}{$3\mid (n_e-A_{0})$}\\
\hline
\end{tabular}

&

\begin{tabular}{|>{\centering\arraybackslash}m{3.2cm}|>{\centering\arraybackslash}m{3.2cm}|}
\hline
$3^{2t}\mid (n_e-A_t)$ &
\cellcolor{gray!55}$3^{2t+1}\nmid (n_e-A_t)$\\
\hline
$3^{2t-2}\mid (n_e-A_{t-1})$ & $3^{2t-1}\mid (n_e-A_{t-1})$\\
\hline
$\vdots$ & $\vdots$\\
\hline
$3^{2}\mid (n_e-A_{1})$ & $3^{3}\mid (n_e-A_{1})$\\
\hline\hline
\multicolumn{2}{|c|}{$3\mid (n_e-A_{0})$}\\
\hline
\end{tabular}

\end{tabular}

\caption{Divisibility diagram associated with the offsets $(A_t)$ governing the transition from type-$1$ to type-$2$ blocks.}
\label{fig:A-diagram}
\end{figure}

\begin{definition}[Divisibility type with respect to $(A_t)$]\label{D:divA}
Let $n_e$ be a pure point of $J_4$ with $3\mid(n_e-A_0)$, i.e.\ $3\mid n_e$, and let
$t\ge 1$ be an integer. The conditions below are exactly those displayed in
Figure~\ref{fig:A-diagram}; we state them in closed form so that the definition is
self-contained.

\begin{enumerate}
\item [ {\rm (a)}] We say that $n_e$ has odd $A$-divisibility type $t$ if
\[
3^{\,2u}\mid (n_e-A_u)\ \text{ and }\ 3^{\,2u+1}\mid (n_e-A_u)
\quad\text{for all } 0\le u\le t-1,
\qquad\text{but}\qquad
3^{\,2t}\nmid (n_e-A_t).
\]
(The terminal failure occurs in the left-hand column of floor $t$.)

\item [ {\rm (b)}] We say that $n_e$ has even $A$-divisibility type $t$ if
\[
3^{\,2u}\mid (n_e-A_u)\ \text{ and }\ 3^{\,2u+1}\mid (n_e-A_u)
\quad\text{for all } 0\le u\le t-1,
\]
\[
\text{and}\qquad
3^{\,2t}\mid (n_e-A_t),\quad \text{ but }\quad 3^{\,2t+1}\nmid (n_e-A_t).
\]
(The terminal failure occurs in the right-hand column of floor $t$.)
\end{enumerate}

In either case, the integer $t$ is the terminal index of the divisibility
process associated with $(A_t)$; the floor-$0$ basement condition $3\mid(n_e-A_0)$ is
the entry requirement, and successive floors $u\ge1$ each impose the pair of
conditions on $n_e-A_u$ above.
\end{definition}

\medskip
\noindent
\textbf{Offsets for the type-$2$ to type-$1$ transition.}

For every positive integer $t$, define the offset sequence $(B_t)$ by
\[
B_1 = 1, \qquad
B_t = 9B_{t-1} + 10 \quad (t \ge 2).
\]
The same two properties hold here: the gap between consecutive offsets,
\[
B_t - B_{t-1} = 2\cdot 3^{\,2t-2} \qquad (t\ge 2),
\]
and the closed form $B_t=\tfrac14(9^t-5)$, which gives $B_t<3^{2t-1}$ for
every $t\ge1$.

\begin{figure}[ht]
\centering
\renewcommand{\arraystretch}{1.35}

\begin{tabular}{cc}
\textbf{Odd case} & \textbf{Even case} \\[4pt]

\begin{tabular}{|>{\centering\arraybackslash}m{3.2cm}|>{\centering\arraybackslash}m{3.2cm}|}
\hline
\cellcolor{gray!55}$3^{2t-1}\nmid (n_e-B_t)$ & \\
\hline
$3^{2t-3}\mid (n_e-B_{t-1})$ & $3^{2t-2}\mid (n_e-B_{t-1})$\\
\hline
$\vdots$ & $\vdots$\\
\hline
$3\mid (n_e-B_{1})$ & $3^{2}\mid (n_e-B_{1})$\\
\hline
\end{tabular}

&

\begin{tabular}{|>{\centering\arraybackslash}m{3.2cm}|>{\centering\arraybackslash}m{3.2cm}|}
\hline
$3^{2t-1}\mid (n_e-B_t)$ &
\cellcolor{gray!55}$3^{2t}\nmid (n_e-B_t)$\\
\hline
$3^{2t-3}\mid (n_e-B_{t-1})$ & $3^{2t-2}\mid (n_e-B_{t-1})$\\
\hline
$\vdots$ & $\vdots$\\
\hline
$3 \mid (n_e-B_{1})$ & $3^{2}\mid (n_e-B_{1})$\\
\hline
\end{tabular}

\end{tabular}

\caption{Divisibility diagram associated with the offsets $(B_t)$ governing the transition from type-$2$ to type-$1$ blocks.}
\label{fig:B-diagram}
\end{figure}

\begin{definition}[Divisibility type with respect to $(B_t)$]\label{D:divB}
Let $n_e$ be a pure point of $J_4$ and let $t\ge 1$ be an integer. The conditions
below are exactly those displayed in Figure~\ref{fig:B-diagram}; unlike the
$(A_t)$ process there is no basement, and the process begins directly at floor~$1$.

\begin{enumerate}
\item [ {\rm (a)}] We say that $n_e$ has odd $B$-divisibility type $t$ if
\[
3^{\,2u-1}\mid (n_e-B_u)\ \text{ and }\ 3^{\,2u}\mid (n_e-B_u)
\quad\text{for all } 1\le u\le t-1,
\]
\[
\text{but}\qquad
3^{\,2t-1}\nmid (n_e-B_t).
\]
(The terminal failure occurs in the left-hand column of floor $t$. For $t=1$ the
range $1\le u\le 0$ is empty, so the condition reduces to $3\nmid(n_e-B_1)$.)

\item [ {\rm (b)}] We say that $n_e$ has even $B$-divisibility type $t$ if
\[
3^{\,2u-1}\mid (n_e-B_u)\ \text{ and }\ 3^{\,2u}\mid (n_e-B_u)
\quad\text{for all } 1\le u\le t-1,
\]
\[
\text{and}\qquad
3^{\,2t-1}\mid (n_e-B_t),\quad \text{ but }\quad 3^{\,2t}\nmid (n_e-B_t).
\]
(The terminal failure occurs in the right-hand column of floor $t$.)
\end{enumerate}

In either case, the integer $t$ is the terminal index of the divisibility
process associated with $(B_t)$.
\end{definition}

One point deserves emphasis: what these diagrams do and do not do. They do not
generate the
pure points on their own; the points come from $J_4$ itself, through
Lemma~\ref{L:J4_engine}. What the diagram supplies is a finite certificate: starting
from $n_e$, it confirms one opposite-type point after another until a power of~$3$
fails to divide, and the floor of that failure is exactly how far the next block
reaches. The following two lemmas make this precise, building the points floor by
floor.

\begin{remark}[Local notation]\label{R:local-sq}
In Lemmas~\ref{L:finite-recursion} and~\ref{L:finite-recursion-B} below, the symbols
$s_1,s_2,\dots$ denote the auxiliary integers of the floor-by-floor recursion and are
local to those lemmas and their proofs. They are unrelated to the ternary-quotient
integers $s_1,s_2$ used in the proof of Proposition~\ref{P:initial-block}.
\end{remark}

\begin{lemma}[Finite recursive construction up to floor $k$, first case]\label{L:finite-recursion}
Let $n_e$ be a pure point of $J_4$ of type-$1$.
Fix an integer $k\ge 0$ and assume that the $A$-divisibility conditions of
Definition~\ref{D:divA} (equivalently, the odd/even diagram of
Figure~\ref{fig:A-diagram}) hold completely up to floor $k$; that is, all the
required divisibilities are satisfied for floors $0,1,2,\dots,k$. For $k=0$
this is only the basement condition $3\mid n_e$, and the lists below reduce to
the single integer $s_1$ and the single point $n_e^{(1)}$.

Then there exists a sequence of integers
\[
s_1,s_2,\dots,s_{2k+1}
\]
such that
\[
s_1=\frac{n_e}{3}
\]
and the sequence $(s_q)$ satisfies the nested recursion
\begin{equation}\label{eq:sq-rec}
s_q=
\begin{cases}
3s_{q+1}+2, & \text{if $q$ is odd},\\[2pt]
3s_{q+1},   & \text{if $q$ is even},
\end{cases}
\qquad 1\le q\le 2k .
\end{equation}

Moreover, if
\(
n_e^{(1)},n_e^{(2)},\dots,n_e^{(2k+1)}
\)
denote the next $2k+1$ high extremal points following $n_e$, then they satisfy
\begin{equation}\label{eq:neq-rec}
n_e^{(q)}=
\begin{cases}
4^{q}\,s_q+4^{\,q-1}-1, & \text{if $q$ is odd},\\[2pt]
4^{q}\,s_q+3\cdot 4^{\,q-1}-1, & \text{if $q$ is even},
\end{cases}
\qquad 1\le q\le 2k+1,
\end{equation}
and each $n_e^{(q)}$ is a type-$2$ pure point.
\end{lemma}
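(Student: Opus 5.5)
The plan is to prove the statement by induction on $q$, repeating the engine step of Remark~\ref{R:maxfloor}. The first task is to turn the floor conditions into the integers $s_q$. We set
\[
s_{2u+1}:=\frac{n_e-A_u}{3^{2u+1}}\quad(0\le u\le k),\qquad s_{2u}:=\frac{n_e-A_{u-1}-2\cdot3^{2u-1}}{3^{2u}}\quad(1\le u\le k).
\]
The hypothesis that floors $0,\dots,k$ of Figure~\ref{fig:A-diagram} hold completely says in particular that $3^{2u+1}\mid(n_e-A_u)$ for $u\le k$. For $u=0$ this is the basement condition $3\mid n_e$, and for $u\ge1$ it is the right-hand column. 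So every odd-indexed $s$ is an integer, and $s_1=n_e/3$. For the even-indexed ones we use the gap identity $A_u-A_{u-1}=2\cdot3^{2u-1}$, which gives $s_{2u}=(n_e-A_u)/3^{2u}=3s_{2u+1}$. Hence $s_{2u}$ is an integer, and this same identity is the even case of \eqref{eq:sq-rec}. The odd case $s_{2u-1}=3s_{2u}+2$ is immediate from the definitions: $3^{2u-1}s_{2u-1}=n_e-A_{u-1}=3^{2u-1}(3s_{2u}+2)$. This establishes \eqref{eq:sq-rec} for $1\le q\le 2k$.

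Next comes the base case of \eqref{eq:neq-rec}. Since $n_e$ is of type-$1$, the type-value identity (Lemma~\ref{L:type-value}) gives $\alpha=J_4(n_e+1)=2$. Lemma~\ref{L:maxfloorJ4} then gives $x=\lfloor(n_e-1)/3\rfloor=s_1-1$. The engine step lands at $n_e^{(1)}=n_e+1+s_1-1=4s_1$, with $J_4(n_e^{(1)})=2+4(s_1-1)=4s_1-2$. This point is of type-$2$, and $4s_1=4^1s_1+4^0-1$, as claimed.

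For the inductive step, suppose $n_e^{(q)}$ has the stated form and is of type-$2$, with $q\le 2k$. Then $\alpha=1$ and $x=\lfloor n_e^{(q)}/3\rfloor$. If $q$ is odd, we substitute $s_q=3s_{q+1}+2$. Since $2\cdot4^q+4^{q-1}=9\cdot4^{q-1}$, this gives
\[
n_e^{(q)}=3\bigl(4^qs_{q+1}+3\cdot4^{q-1}\bigr)-1,\qquad x=4^qs_{q+1}+3\cdot4^{q-1}-1.
\]
The engine step then yields $n_e^{(q+1)}=4^{q+1}s_{q+1}+3\cdot4^q-1$, which is the even formula. If $q$ is even, we substitute $s_q=3s_{q+1}$. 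This gives $n_e^{(q)}=3(4^qs_{q+1}+4^{q-1})-1$ and $x=4^qs_{q+1}+4^{q-1}-1$, so $n_e^{(q+1)}=4^{q+1}s_{q+1}+4^q-1$, the odd formula. In both cases $n_e^{(q)}=3(x+1)-1$ and $n_e^{(q+1)}=4(x+1)-1$. Hence $J_4(n_e^{(q+1)})=1+4x=n_e^{(q+1)}-2$, so $n_e^{(q+1)}$ is again a type-$2$ pure point. The engine step also guarantees that no high extremal point lies strictly between consecutive points, so these really are the next $2k+1$ high extremal points. As a cross-check, this agrees with Corollary~\ref{C:successor}: each $n_e^{(q)}\equiv2\pmod 3$ for $q\le 2k$.

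The main obstacle is the bookkeeping in the first step. One has to see that "the diagram holds completely up to floor $k$" reduces to the right-hand divisibilities $3^{2u+1}\mid(n_e-A_u)$, since each right-hand condition implies the left-hand one on the same floor. One also has to see that the gap identity $A_u-A_{u-1}=2\cdot3^{2u-1}$ is exactly what makes the even-indexed $s$ integral and compatible with both branches of \eqref{eq:sq-rec}. Once the $s_q$ are fixed, the induction is the two short floor computations above.
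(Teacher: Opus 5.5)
Your proof is correct and follows essentially the same route as the paper. It uses the same engine steps ($\alpha=2$ from $n_e$, then $\alpha=1$ from each type-$2$ point) and the same gap identity $A_u-A_{u-1}=2\cdot3^{2u-1}$. The only difference is bookkeeping: you define all the $s_q$ up front by $s_{2u+1}=(n_e-A_u)/3^{2u+1}$ and $s_{2u}=(n_e-A_u)/3^{2u}$ and induct on $q$, whereas the paper extracts each $s_q$ from the next divisibility condition floor by floor, arriving at the same identity $n_e-A_k=3^{2k}s_{2k}=3^{2k+1}s_{2k+1}$.
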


\begin{proof}
The idea is to build the points one floor at a time, reading off each new term from
the divisibility conditions of Definition~\ref{D:divA} and feeding it through
Lemma~\ref{L:maxfloorJ4}. We check the first floor by hand, then show that if the
conditions hold through floor $k-1$, completing floor $k$ produces exactly two more
terms of the form claimed. Iterating gives the whole list whenever the diagram is
complete up to floor $k$.

\medskip

The basement condition $3\mid(n_e-A_0)=n_e$ lets us write $n_e=3s_1$, so
$s_1=n_e/3$. Since $n_e$ is of type-$1$, the engine step
(Remark~\ref{R:maxfloor}, with $\alpha=2$) gives
$x_0=\lfloor(n_e-1)/3\rfloor=s_1-1$ and the successor
\[
n_e^{(1)}=n_e+1+x_0=4s_1,
\qquad
J_4(n_e^{(1)})=2+4(s_1-1)=n_e^{(1)}-2,
\]
a type-$2$ point, in agreement with \eqref{eq:neq-rec} at $q=1$. This settles
$k=0$.

Now suppose floor~$1$ is complete. Its first column, with $A_1=6$, reads
\[
3^2\mid(n_e-A_1)
\iff 9\mid 3(s_1-2)
\iff 3\mid(s_1-2),
\]
so $s_1=3s_2+2$ for some $s_2\ge0$, which is \eqref{eq:sq-rec} at $q=1$. One more
engine step from the type-$2$ point $n_e^{(1)}=4s_1=12s_2+8$ (with $\alpha=1$,
so $x=\lfloor n_e^{(1)}/3\rfloor=4s_2+2$) gives
\[
n_e^{(2)}=16s_2+11,
\]
matching \eqref{eq:neq-rec} at $q=2$. The second column on floor~$1$ then says
$3^3\mid(n_e-A_1)$; since $n_e-A_1=3(s_1-2)=9s_2$, this is just $3\mid s_2$, so
$s_2=3s_3$ for some $s_3\ge0$, which is \eqref{eq:sq-rec} at $q=2$. A final engine
step from $n_e^{(2)}=48s_3+11$ (again $\alpha=1$, $x=16s_3+3$) yields
\[
n_e^{(3)}=64s_3+15,
\]
as in \eqref{eq:neq-rec} at $q=3$.

\medskip
Assume $k\ge2$ and that the divisibility conditions hold up to floor $k-1$, so that
\begin{equation}\label{eq:last-window-k}
n_e-A_{k-1}=3^{2k-1}s_{2k-1},
\qquad s_{2k-1}\in\mathbb{Z}_{\ge0},
\end{equation}
and that the last constructed extremal point satisfies
\begin{equation}\label{eq:closed-odd-2k1}
n_e^{(2k-1)}=4^{2k-1}s_{2k-1}+4^{2k-2}-1.
\end{equation}
Assume also that floor $k$ is complete.

\smallskip
\noindent(i) Extending the $s$-sequence.
If the first column on floor $k$ holds,
\[
3^{2k}\mid(n_e-A_k),
\]
then using the property $A_k-A_{k-1}=2\cdot3^{2k-1}$ we obtain
\[
n_e-A_k=(n_e-A_{k-1})-(A_k-A_{k-1})
=3^{2k-1}(s_{2k-1}-2).
\]
Hence $3\mid(s_{2k-1}-2)$, so there exists $s_{2k}\ge0$ such that
\begin{equation}\label{eq:s-rec-odd}
s_{2k-1}=3s_{2k}+2,
\end{equation}
which is precisely the odd recursion in \eqref{eq:sq-rec}.

\smallskip
\noindent (ii) Computing $x_1$.
The point $n_e^{(2k-1)}$ is of type-$2$, so the type-value identity
(Lemma~\ref{L:type-value}) gives $J_4(n_e^{(2k-1)}+1)=1$, and
Lemma~\ref{L:maxfloorJ4} (with $A=n_e^{(2k-1)}$, $\alpha=1$) gives
\[
x_1=\Bigl\lfloor\frac{n_e^{(2k-1)}}{3}\Bigr\rfloor.
\]
Using \eqref{eq:closed-odd-2k1} and \eqref{eq:s-rec-odd},
\begin{align}
\frac{n_e^{(2k-1)}}{3}
&=\frac{4^{2k-1}(3s_{2k}+2)+4^{2k-2}-1}{3}\\
&=4^{2k-1}s_{2k}+3\cdot4^{2k-2}-\frac13.
\end{align}
Thus
\begin{equation}\label{eq:X-value}
x_1=4^{2k-1}s_{2k}+3\cdot4^{2k-2}-1.
\end{equation}

\smallskip
\noindent (iii) Constructing $n_e^{(2k)}$.
The engine step (Remark~\ref{R:maxfloor}) gives
\[
n_e^{(2k)}=n_e^{(2k-1)}+1+x_1.
\]
Substituting the previous expressions yields
\[
n_e^{(2k)}=4^{2k}s_{2k}+3\cdot4^{2k-1}-1,
\]
which matches the even case of \eqref{eq:neq-rec}.

\smallskip
\noindent (iv) Type verification.
By the same engine step,
\[
J_4(n_e^{(2k)})=1+4x_1
=4^{2k}s_{2k}+3\cdot4^{2k-1}-3
=n_e^{(2k)}-2,
\]
so $n_e^{(2k)}$ is of type-$2$.

\smallskip
\noindent (v) Completing the second column.
If the divisibility condition in the second column on floor $k$ also holds,
\[
3^{2k+1}\mid(n_e-A_k),
\]
then since $n_e-A_k=3^{2k-1}(s_{2k-1}-2)=3^{2k}s_{2k}$ by step~(i) and
\eqref{eq:s-rec-odd}, we obtain $3\mid s_{2k}$, hence
\begin{equation}\label{eq:s-rec-even}
s_{2k}=3s_{2k+1},
\end{equation}
which is the even recursion in \eqref{eq:sq-rec}.

\smallskip
\noindent (vi) Computing $x_2$.
By step~(iv) the point $n_e^{(2k)}$ is of type-$2$, so again
$J_4(n_e^{(2k)}+1)=1$ and Lemma~\ref{L:maxfloorJ4} (with $A=n_e^{(2k)}$,
$\alpha=1$) gives
\[
x_2=\Bigl\lfloor\frac{n_e^{(2k)}}{3}\Bigr\rfloor.
\]
Using \eqref{eq:s-rec-even},
\[
x_2=4^{2k}s_{2k+1}+4^{2k-1}-1.
\]

\smallskip
\noindent (vii) Constructing $n_e^{(2k+1)}$.
One more engine step,
\[
n_e^{(2k+1)}=n_e^{(2k)}+1+x_2
=4^{2k+1}s_{2k+1}+4^{2k}-1,
\]
which matches the odd case of \eqref{eq:neq-rec}.

\smallskip
\noindent (viii) Type verification.
Again,
\[
J_4(n_e^{(2k+1)})=1+4x_2
=4^{2k+1}s_{2k+1}+4^{2k}-3
=n_e^{(2k+1)}-2,
\]
so $n_e^{(2k+1)}$ is also of type-$2$.

Finally, step~(v) gives $n_e-A_k=3^{2k}s_{2k}=3^{2k+1}s_{2k+1}$, which is
\eqref{eq:last-window-k} with $k$ replaced by $k+1$; together with
\eqref{eq:closed-odd-2k1} at index $2k+1$, established in step~(vii), this is
exactly the inductive hypothesis for the next floor. This completes the finite
recursive construction up to floor $k$ and the proof of the lemma.
\end{proof}

\begin{lemma}[Finite recursive construction up to floor $k$, second case]\label{L:finite-recursion-B}
Let $n_e$ be a pure point of $J_4$ of type-$2$.
Fix an integer $k\ge 1$ and assume that the $B$-divisibility conditions of
Definition~\ref{D:divB} (equivalently, the odd/even diagram of
Figure~\ref{fig:B-diagram}) hold completely up to floor $k$; that is, all the required
divisibilities are satisfied for floors $1,2,\dots,k$.

Then there exists a sequence of integers
\[
s_1,s_2,\dots,s_{2k}
\]
such that
\[
s_1=\frac{n_e-1}{3}
\]
and the sequence $(s_q)$ satisfies the nested recursion
\[
s_q=
\begin{cases}
3s_{q+1}, & \text{if $q$ is odd},\\[2pt]
3s_{q+1}+2, & \text{if $q$ is even},
\end{cases}
\qquad 1\le q\le 2k-1 .
\]

Moreover, if
\[
n_e^{(1)},n_e^{(2)},\dots,n_e^{(2k)}
\]
denote the next $2k$ high extremal points following $n_e$, then they satisfy
\[
n_e^{(q)}=
\begin{cases}
4^{q}\,s_q+3\cdot 4^{\,q-1}-1, & \text{if $q$ is odd},\\[2pt]
4^{q}\,s_q+4^{\,q-1}-1, & \text{if $q$ is even},
\end{cases}
\qquad 1\le q\le 2k,
\]
and each $n_e^{(q)}$ is a type-$1$ pure point.
\end{lemma}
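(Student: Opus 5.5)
The proof will mirror that of Lemma~\ref{L:finite-recursion}, with the roles of the two columns shifted by one because the $(B_t)$ process has no basement. I will build the points two per floor, reading each new $s_q$ off a divisibility condition and feeding the current point through the engine step (Remark~\ref{R:maxfloor}). Since every constructed point will be of type-$1$, each step uses $\alpha=2$, so $x=\lfloor (n-1)/3\rfloor$.

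\textbf{Floor~$1$ by hand.} The left column reads $3\mid(n_e-B_1)=n_e-1$, so $s_1=(n_e-1)/3$ is an integer. Since $n_e$ is of type-$2$, the engine step uses $\alpha=1$ and gives $x=\lfloor n_e/3\rfloor=s_1$. This yields $n_e^{(1)}=n_e+1+s_1=4s_1+2=4s_1+3\cdot4^0-1$, with $J_4(n_e^{(1)})=1+4s_1=n_e^{(1)}-1$, so $n_e^{(1)}$ is of type-$1$. The right column $3^2\mid n_e-1=3s_1$ gives $s_1=3s_2$, the odd recursion. Next, from $n_e^{(1)}=12s_2+2$ with $\alpha=2$ we get $x=\lfloor(12s_2+1)/3\rfloor=4s_2$. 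Hence $n_e^{(2)}=16s_2+3=4^2s_2+4-1$ and $J_4(n_e^{(2)})=2+16s_2=n_e^{(2)}-1$, again type-$1$. At the end of floor~$1$ we therefore have $n_e-B_1=3^2s_2$.

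\textbf{Inductive step.} Assume floors through $k-1$ are complete, with invariants $n_e-B_{k-1}=3^{2k-2}s_{2k-2}$ and $n_e^{(2k-2)}=4^{2k-2}s_{2k-2}+4^{2k-3}-1$, a type-$1$ point. Using the gap $B_k-B_{k-1}=2\cdot3^{2k-2}$, I get $n_e-B_k=3^{2k-2}(s_{2k-2}-2)$.

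1. The left column $3^{2k-1}\mid(n_e-B_k)$ forces $s_{2k-2}=3s_{2k-1}+2$, the even recursion.
2. The engine step with $\alpha=2$ gives $x=\lfloor(n_e^{(2k-2)}-1)/3\rfloor$. Substituting, the numerator is $4^{2k-2}(3s_{2k-1}+2)+4^{2k-3}-2$. Since $2\cdot4^{2k-2}+4^{2k-3}-2=9\cdot4^{2k-3}-2$, this gives $x=4^{2k-2}s_{2k-1}+3\cdot4^{2k-3}-1$.
3. Then $n_e^{(2k-1)}=4^{2k-1}s_{2k-1}+3\cdot4^{2k-2}-1$, and $J_4=2+4x$ equals $n_e^{(2k-1)}-1$, so it is type-$1$.
4. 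The right column $3^{2k}\mid(n_e-B_k)=3^{2k-1}s_{2k-1}$ forces $s_{2k-1}=3s_{2k}$, the odd recursion.
5. One more engine step uses the numerator $n_e^{(2k-1)}-1=3\cdot4^{2k-1}s_{2k}+3\cdot4^{2k-2}-2$. This gives $x=4^{2k-1}s_{2k}+4^{2k-2}-1$, hence $n_e^{(2k)}=4^{2k}s_{2k}+4^{2k-1}-1$ and $J_4(n_e^{(2k)})=n_e^{(2k)}-1$, again type-$1$.
6. Finally $n_e-B_k=3^{2k}s_{2k}$ restores the invariant for the next floor.

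\textbf{Remaining checks.} The only delicate points are the two floor computations in steps~2 and~5, where the residue $-1/3$ or $-2/3$ must come out exactly. This requires tracking $4^{m}\equiv1\pmod 3$ in the constant terms. I also need nonnegativity of the $s_q$, which follows from $n_e\ge6$ and $n_e-B_k\ge0$, or simply from the fact that the points are increasing (Lemma~\ref{L:monotone}).
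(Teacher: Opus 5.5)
Your proof is correct and is essentially the paper's argument. It handles floor~$1$ by hand, and then each further floor contributes two engine steps with $\alpha=2$ whose floors come out as an integer minus $2/3$, mirroring Lemma~\ref{L:finite-recursion}; your explicit invariant $n_e-B_k=3^{2k}s_{2k}$ and use of the gap identity make the bookkeeping cleaner than the paper's write-up, correctly deriving $s_{2u-1}=3s_{2u}$ from the right column of floor~$u$ and $s_{2u}=3s_{2u+1}+2$ from the left column of floor~$u+1$ (the paper's text swaps these column labels).
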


\begin{proof}
The argument parallels that of Lemma~\ref{L:finite-recursion}, with three
structural differences forced by the offset family $(B_t)$: the diagram has no
basement (it begins at floor~$1$, with $B_1=1$ rather than $A_0=0$); the seed is
$s_1=(n_e-1)/3$ rather than $n_e/3$; and the additive constants $3\cdot4^{\,q-1}$
and $4^{\,q-1}$ in the closed form are interchanged between the odd and even
parities. We give the base case and one inductive step in full, the points at which
these differences enter.

\medskip
Since $n_e$ is of type-$2$, the type-value identity (Lemma~\ref{L:type-value}) gives $J_4(n_e+1)=1$,
so the first step uses $j=1$. The entry condition of the $B$-diagram is its
floor-$1$ left column $3\mid(n_e-B_1)$, that is, $3\mid(n_e-1)$, which holds
exactly when $n_e\equiv1$ (mod $3$). Write $n_e=3s_1+1$, so $s_1=(n_e-1)/3$. With $j=1$,
Lemma~\ref{L:maxfloorJ4} gives $x_1=\lfloor n_e/3\rfloor=s_1$, and
Lemma~\ref{L:J4_engine} produces
\[
n_e^{(1)}=n_e+1+s_1=4s_1+2=4^{1}s_1+3\cdot4^{0}-1,
\]
the odd case ($q=1$) of the closed form. Its value is
$J_4(n_e^{(1)})=j+4x_1=1+4s_1=n_e^{(1)}-1$, so $n_e^{(1)}$ is of type-$1$.

\medskip
Suppose the construction has reached the type-$1$ point $n_e^{(q)}$ given by the
closed form, and that the divisibility conditions allow it to continue. There are two
sub-steps, according to the parity of $q$; both apply Lemma~\ref{L:J4_engine} to a
type-$1$ point, where $j=2$ and the floor count is
$x=\lfloor(n_e^{(q)}-1)/3\rfloor$.

From odd $q$ to even $q+1$: write $q=2u-1$, so
$n_e^{(2u-1)}=4^{2u-1}s_{2u-1}+3\cdot4^{2u-2}-1$. The left column of floor $u$ forces
the odd recursion $s_{2u-1}=3s_{2u}$, and substituting gives
$n_e^{(2u-1)}=12\cdot4^{2u-2}s_{2u}+3\cdot4^{2u-2}-1$. Then
\[
x=\Bigl\lfloor\frac{n_e^{(2u-1)}-1}{3}\Bigr\rfloor=4^{2u-1}s_{2u}+4^{2u-2}-1,
\]
so
\[
n_e^{(2u)}=n_e^{(2u-1)}+1+x=4^{2u}s_{2u}+4^{2u-1}-1,
\qquad J_4(n_e^{(2u)})=2+4x=n_e^{(2u)}-1,
\]
the even case of the closed form, again of type-$1$.

From even $q$ to odd $q+1$: write $q=2u$, so
$n_e^{(2u)}=4^{2u}s_{2u}+4^{2u-1}-1$. The right column of floor $u$ forces the even
recursion $s_{2u}=3s_{2u+1}+2$, and substituting gives
$n_e^{(2u)}=3\cdot4^{2u}s_{2u+1}+2\cdot4^{2u}+4^{2u-1}-1$. Then
\[
x=\Bigl\lfloor\frac{n_e^{(2u)}-1}{3}\Bigr\rfloor=4^{2u}s_{2u+1}+3\cdot4^{2u-1}-1,
\]
so
\[
n_e^{(2u+1)}=n_e^{(2u)}+1+x=4^{2u+1}s_{2u+1}+3\cdot4^{2u}-1,
\qquad J_4(n_e^{(2u+1)})=n_e^{(2u+1)}-1,
\]
the odd case of the closed form, of type-$1$. Each completed floor thus contributes
two more type-$1$ points, exactly as in Lemma~\ref{L:finite-recursion}, and the
construction runs as long as the floors stay complete. This proves the lemma.
\end{proof}

The next proposition turns these constructions into a formula: the length of each
block equals the floor at which its divisibility process stops. Nothing like this is
needed for $J_3$~\cite{BCQC-J3}, where the pure points between fixed points behave more
simply and no moving divisibility test arises; the two pure-point types of $J_4$ are
what force the floor-by-floor mechanism here.

Both transitions are proved below; the first sets the pattern, and the second
reuses it through Lemma~\ref{L:finite-recursion-B}.

\begin{proposition}[Transition meters via divisibility types]\label{P:meters-div}
Let $n_e$ be a pure point of $J_4$ lying in an interval
$(n_p^{(\ell)},n_p^{(\ell+1)})$ with $n_p^{(\ell)}\ge3$. When $3\mid n_e$
(respectively $3\mid(n_e-B_1)$), the point $n_e$ has a well-defined odd or
even $A$-divisibility (respectively $B$-divisibility) type; this is Step~0
of the proof.

\begin{enumerate}[label=\arabic*., leftmargin=2.2em]

\item \textbf{Type-$1$ to type-$2$ transition.}
Assume that $n_e$ is of type-$1$.

\begin{enumerate}[label=\alph*., leftmargin=2.0em]

\item If $3\nmid (n_e-A_0)$, then
\[
\mu_{1\to 2}(n_e)=0.
\]

\item Assume that $t\ge1$. Then the transition meter of $n_e$ is given by
\[
\mu_{1\to 2}(n_e)
=
\begin{cases}
2t-1, & \text{if $n_e$ has odd $A$-divisibility type $t$},\\[4pt]
2t,   & \text{if $n_e$ has even $A$-divisibility type $t$}.
\end{cases}
\]
\end{enumerate}

\item \textbf{Type-$2$ to type-$1$ transition.}
Assume that $n_e$ is of type-$2$.

\begin{enumerate}[label=\alph*., leftmargin=2.0em]

\item If $3\nmid (n_e-B_1)$, then
\[
\mu_{2\to 1}(n_e)=0.
\]

\item Assume that $t\ge1$ and that $n_e$ has $B$-divisibility type $t$. Then the
transition meter of $n_e$ is given by
\[
\mu_{2\to 1}(n_e)
=
\begin{cases}
2t-2, & \text{if $n_e$ has odd $B$-divisibility type $t$},\\[4pt]
2t-1,   & \text{if $n_e$ has even $B$-divisibility type $t$}.
\end{cases}
\]
In particular, an odd $B$-divisibility type $t=1$ (the failure
$3\nmid(n_e-B_1)$ occurring already at the first column of floor~$1$) gives
$\mu_{2\to1}(n_e)=0$, recovering case~(a).
\end{enumerate}

\end{enumerate}
\end{proposition}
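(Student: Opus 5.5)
The plan is to combine the constructive Lemmas~\ref{L:finite-recursion} and~\ref{L:finite-recursion-B} with Corollary~\ref{C:successor}. The lemmas certify that a given number of opposite-type points follow $n_e$. The corollary then shows that the run stops exactly at the first failed divisibility.

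\textbf{Step~0: the type is well defined.} The terms $n_e-A_u$ are eventually negative, since $A_u\to\infty$. Likewise $n_e-A_u\ne0$ except in degenerate cases, and $3^{2u}>n_e$ for large $u$. Hence the tower of conditions in Figure~\ref{fig:A-diagram} (or Figure~\ref{fig:B-diagram}) cannot hold forever. A nonzero integer smaller in absolute value than $3^{2u}$ cannot be divisible by $3^{2u}$, so a first failing cell exists. If $n_e=A_u$ happened, the bounds $A_u<3^{2u}$ and $A_{u+1}-A_u=2\cdot3^{2u+1}$ give $|n_e-A_{u+1}|<3^{2u+2}$ with $n_e-A_{u+1}\neq0$, so the next floor fails anyway. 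The same argument works for $(B_t)$.

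\textbf{Part~1(a).} Here $n_e$ is of type-$1$ and $n_e\not\equiv0\pmod3$. Corollary~\ref{C:successor} gives $\operatorname{type}(n_e^+)\equiv n_e+2\pmod3$, which is not $2$, so $\mu_{1\to2}(n_e)=0$. Part~2(a) is identical: for a type-$2$ point, $n_e^+$ is of type-$1$ iff $n_e+3\equiv1$, that is, iff $n_e\equiv1=B_1\pmod3$.

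\textbf{Part~1(b), odd type $t$.} Floors $0,\dots,t-1$ are complete, so Lemma~\ref{L:finite-recursion} with $k=t-1$ produces $2t-1$ type-$2$ points $n_e^{(1)},\dots,n_e^{(2t-1)}$. The last is $n_e^{(2t-1)}=4^{2t-1}s_{2t-1}+4^{2t-2}-1$, and $n_e-A_{t-1}=3^{2t-1}s_{2t-1}$. To see that the run stops, apply Corollary~\ref{C:successor} to this type-$2$ point: the next point is again type-$2$ iff $n_e^{(2t-1)}\equiv2\pmod3$. Since $4\equiv1$, this reads $s_{2t-1}\equiv2\pmod3$. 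That is exactly $3\mid(s_{2t-1}-2)$, which is equivalent to $3^{2t}\mid(n_e-A_t)$, because $n_e-A_t=3^{2t-1}(s_{2t-1}-2)$ via the gap identity. That divisibility fails by hypothesis, so $\mu_{1\to2}=2t-1$.

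\textbf{Part~1(b), even type $t$.} The lemma's steps (i)--(iv) only use the left column of floor $t$. They give one more point $n_e^{(2t)}=4^{2t}s_{2t}+3\cdot4^{2t-1}-1\equiv s_{2t}+2\pmod3$. This continues the run iff $3\mid s_{2t}$, that is, iff $3^{2t+1}\mid(n_e-A_t)$, which fails. So $\mu_{1\to2}=2t$.

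\textbf{Part~2(b).} This is the same argument with Lemma~\ref{L:finite-recursion-B}, taking $k=t-1$ complete floors. That gives $2t-2$ type-$1$ points; for $t=1$ the count is $0$, and we fall back on case (a). For a type-$1$ point, Corollary~\ref{C:successor} says the run continues iff the point is $\equiv2\pmod3$. At $q=2t-2$ (even) the closed form gives $n_e^{(q)}\equiv s_q\pmod 3$, so the condition is $s_{2t-2}\equiv2$. Matching this to $3^{2t-1}\mid(n_e-B_t)$ requires the analogue of \eqref{eq:last-window-k}, namely $n_e-B_{u}=3^{2u}s_{2u+1}$ in suitable indexing, together with $B_t-B_{t-1}=2\cdot3^{2t-2}$. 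For the extra point in the even case, odd $q=2t-1$ gives $n_e^{(q)}\equiv s_q+2$, so the condition is $3\mid s_{2t-1}$, which corresponds to $3^{2t}\mid(n_e-B_t)$.

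\textbf{Main obstacle.} The hardest part is the index bookkeeping linking $s_q$ to $n_e-B_u$, because the $B$-lemma's proof never states this relation explicitly. I would first establish it by a short induction mirroring \eqref{eq:last-window-k}: $n_e-B_u=3^{2u-1}s_{2u-1}$ up to the sign of the $-2$ shift, to be checked at $u=1$, where $n_e-1=3s_1$. Only then would I read off the terminal congruences.
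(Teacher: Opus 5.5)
Your proposal is correct and follows the paper's proof essentially step for step: termination from the bounds $A_t<3^{2t}$ and $B_t<3^{2t-1}$, cases (a) from Corollary~\ref{C:successor}, and cases (b) by applying Lemma~\ref{L:finite-recursion} or Lemma~\ref{L:finite-recursion-B} with $k=t-1$, then using the gap identities to turn the floor-$t$ columns into the continuation criterion $n_e^{(q)}\equiv 2 \pmod 3$, with one extra engine step in the even case. Two details to tidy. The bookkeeping you flag resolves as $n_e-B_{t-1}=3^{2t-2}s_{2t-2}$ and $n_e-B_t=3^{2t-1}s_{2t-1}$, so your tentative $3^{2u}s_{2u+1}$ should read $3^{2u}s_{2u}$. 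And for even type $t=1$, Lemma~\ref{L:finite-recursion-B} does not apply (it needs $k\ge1$), so the single point $n_e^{(1)}=4s_1+2$ must come from a direct engine step from $n_e=3s_1+1$, which the paper treats as a separate case, rather than from an appeal to case~(a).
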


\begin{proof}
Step 0: the divisibility processes terminate.
Suppose first that $n_e\ne A_t$ for every $t$, and pick $t$ with $A_t>n_e$.
The closed form gives $0<A_t-n_e\le A_t<3^{2t}$, so $3^{2t}\nmid(n_e-A_t)$ and
the $A$-process fails at floor $t$ at the latest. If instead $n_e=A_{t_0}$ for
some $t_0$, then the gap identity gives
$n_e-A_{t_0+1}=-2\cdot3^{\,2t_0+1}$, and $3^{2t_0+2}\nmid 2\cdot3^{\,2t_0+1}$,
so the process fails at floor $t_0+1$. The $B$-process terminates by the same
two-case argument, using $B_t<3^{2t-1}$ and the gap
$B_t-B_{t-1}=2\cdot3^{\,2t-2}$. The first failing condition fixes the terminal
index and its parity, so the divisibility types of Definitions~\ref{D:divA}
and~\ref{D:divB} are well defined.

\medskip
\noindent Part 1: type-$1$ to type-$2$.
Corollary~\ref{C:successor} supplies the continuation criterion used in both
parts: the high extremal point after a pure point $m$ has the same type as $m$
if and only if $m\equiv2$ (mod $3$).

\smallskip
\noindent Case (a).
Here $3\nmid n_e$. By Corollary~\ref{C:successor} the point after the
type-$1$ point $n_e$ has type $\equiv n_e+2$ (mod $3$); type-$2$ would force
$n_e\equiv0$ (mod $3$), which is excluded. So no type-$2$ point follows $n_e$,
and $\mu_{1\to2}(n_e)=0$.

\smallskip
\noindent Case (b).
Let $n_e$ have $A$-divisibility type $t\ge1$, so the conditions hold through
floor $t-1$ and the first failure occurs on floor $t$.
Lemma~\ref{L:finite-recursion}, applied with $k=t-1$, produces the integers
$s_1,\dots,s_{2t-1}$ and the consecutive type-$2$ points
$n_e^{(1)},\dots,n_e^{(2t-1)}$ following $n_e$, with
\[
n_e-A_{t-1}=3^{\,2t-1}s_{2t-1},
\qquad
n_e^{(2t-1)}=4^{2t-1}s_{2t-1}+4^{2t-2}-1 ;
\]
for $t=1$ this is the basement case $k=0$, with $s_1=n_e/3$ and
$n_e^{(1)}=4s_1$. Since $4\equiv1$ (mod $3$), the closed form gives
$n_e^{(2t-1)}\equiv s_{2t-1}$ (mod $3$), and the gap identity
$A_t-A_{t-1}=2\cdot3^{\,2t-1}$ turns the floor-$t$ left column into the
continuation criterion at $n_e^{(2t-1)}$:
\begin{align}
3^{2t}\mid(n_e-A_t)
&\iff 3^{2t}\mid 3^{\,2t-1}(s_{2t-1}-2)
\iff s_{2t-1}\equiv2 \pmod 3 \nonumber\\
&\iff n_e^{(2t-1)}\equiv2 \pmod 3 . \label{eq:contA}
\end{align}

If the type is odd, the left column fails, so $n_e^{(2t-1)}\not\equiv2$
(mod $3$), and by the continuation criterion the point after $n_e^{(2t-1)}$ is
not of type-$2$. The run consists of exactly the points
$n_e^{(1)},\dots,n_e^{(2t-1)}$, and $\mu_{1\to2}(n_e)=2t-1$.

If the type is even, the left column holds, so $s_{2t-1}=3s_{2t}+2$ for some
integer $s_{2t}\ge0$, and one more engine step is available. The computation
in the inductive step of Lemma~\ref{L:finite-recursion} (with $k=t$) applies
verbatim and yields the type-$2$ point
\[
n_e^{(2t)}=4^{2t}s_{2t}+3\cdot4^{2t-1}-1\equiv s_{2t}+2 \pmod 3 ,
\]
together with $n_e-A_t=3^{2t}s_{2t}$. The right column then reads
\[
3^{2t+1}\mid(n_e-A_t)\iff 3\mid s_{2t}\iff n_e^{(2t)}\equiv2 \pmod 3 ,
\]
and its failure stops the run after $n_e^{(2t)}$ by the continuation
criterion. Hence $\mu_{1\to2}(n_e)=2t$.

\medskip
\noindent Part 2: type-$2$ to type-$1$.

\smallskip
\noindent Case (a).
By Corollary~\ref{C:successor} the point after the type-$2$ point $n_e$ has
type $\equiv n_e+3\equiv n_e$ (mod $3$); type-$1$ would force $n_e\equiv1$
(mod $3$), that is, $3\mid(n_e-B_1)$, which is excluded. So
$\mu_{2\to1}(n_e)=0$.

\smallskip
\noindent Case (b) with $t=1$.
Here $3\mid(n_e-B_1)$; write $n_e=3s_1+1$. The engine step from $n_e$ (with
$\alpha=1$) gives $x_0=\lfloor n_e/3\rfloor=s_1$ and the type-$1$ point
\[
n_e^{(1)}=n_e+1+s_1=4s_1+2\equiv s_1+2 \pmod 3 ,
\qquad
J_4(n_e^{(1)})=1+4s_1=n_e^{(1)}-1 .
\]
The floor-$1$ right column $3^{2}\mid(n_e-B_1)=3s_1$ says $3\mid s_1$, that
is, $n_e^{(1)}\equiv2$ (mod $3$). For even type $t=1$ this fails, so the run
stops after the single point $n_e^{(1)}$ and $\mu_{2\to1}(n_e)=1=2t-1$. Odd
type $t=1$ is case~(a) and gives $0=2t-2$.

\smallskip
\noindent Case (b) with $t\ge2$.
Lemma~\ref{L:finite-recursion-B}, applied with $k=t-1$, produces
$s_1,\dots,s_{2t-2}$ and the consecutive type-$1$ points
$n_e^{(1)},\dots,n_e^{(2t-2)}$ following $n_e$, with
\[
n_e-B_{t-1}=3^{\,2t-2}s_{2t-2},
\qquad
n_e^{(2t-2)}=4^{2t-2}s_{2t-2}+4^{2t-3}-1\equiv s_{2t-2} \pmod 3 .
\]
The gap identity $B_t-B_{t-1}=2\cdot3^{\,2t-2}$ turns the floor-$t$ left
column into the continuation criterion at $n_e^{(2t-2)}$:
\[
3^{2t-1}\mid(n_e-B_t)
\iff 3^{2t-1}\mid 3^{\,2t-2}(s_{2t-2}-2)
\iff n_e^{(2t-2)}\equiv2 \pmod 3 .
\]
If the type is odd, this fails and the run stops at $n_e^{(2t-2)}$, giving
$\mu_{2\to1}(n_e)=2t-2$. If the type is even, write $s_{2t-2}=3s_{2t-1}+2$;
the engine step from the type-$1$ point $n_e^{(2t-2)}$ (with $\alpha=2$), as
computed in the inductive step of Lemma~\ref{L:finite-recursion-B} from even
$q=2t-2$ to odd $q=2t-1$, yields the type-$1$ point
\[
n_e^{(2t-1)}=4^{2t-1}s_{2t-1}+3\cdot4^{2t-2}-1\equiv s_{2t-1}+2 \pmod 3 ,
\]
together with $n_e-B_t=3^{\,2t-1}s_{2t-1}$. The right column reads
\[
3^{2t}\mid(n_e-B_t)\iff 3\mid s_{2t-1}\iff n_e^{(2t-1)}\equiv2 \pmod 3 ,
\]
and its failure stops the run after $n_e^{(2t-1)}$. Hence
$\mu_{2\to1}(n_e)=2t-1$.
\end{proof}
Which half of Proposition~\ref{P:meters-div} applies is decided by the type of the
current point: a type-$1$ point starts a $1\to2$ transition, a type-$2$ point a
$2\to1$ transition. Running the two halves alternately walks across an entire
interval, as the next example shows.

\begin{example}[Alternating use of the transition meters]\label{Ex:altmeters_6889}
Recall from Example~\ref{Ex:ternarity1_5167} that the fixed point $n_p^{(11)}=5{,}167$
opens with a type-$2$ block whose last point is $n_e^{(11,1)}=6{,}889$. Starting there,
we apply Proposition~\ref{P:meters-div} repeatedly, each time using the half that
matches the type of the current point.

\medskip\noindent Step 1 (type $2\to1$).
For $n_e=6{,}889$, $\;n_e-B_1=6{,}888$, and $3\mid6{,}888$ but $3^2\nmid6{,}888$, so
$n_e$ has even $B$-divisibility type $t=1$ and $\mu_{2\to1}(n_e)=2t-1=1$. The next
block is one type-$1$ point, $n_e^{(11,2)}=9{,}186$.

\medskip\noindent Step 2 (type $1\to2$).
For $n_e=9{,}186$, $\;n_e-A_0=9{,}186$ with $3\mid n_e$, and $3^3\mid(n_e-A_1)$ but
$3^4\nmid(n_e-A_2)$, so $n_e$ has odd $A$-divisibility type $t=2$ and
$\mu_{1\to2}(n_e)=2t-1=3$. The next block is three type-$2$ points, $12{,}248$,
$16{,}331$, $21{,}775$, ending at $n_e^{(11,3)}=21{,}775$.

\medskip\noindent Step 3 (type $2\to1$).
For $n_e=21{,}775$, $\;n_e-B_1=21{,}774$, with $3\mid21{,}774$ but $3^2\nmid21{,}774$:
even $B$-divisibility type $t=1$, so $\mu_{2\to1}(n_e)=1$ and the next block is one
type-$1$ point, $n_e^{(11,4)}=29{,}034$.

\medskip\noindent Step 4 (type $1\to2$).
For $n_e=29{,}034$, $\;n_e-A_0=29{,}034$ with $3\mid n_e$ but $3^2\nmid(n_e-A_1)$: odd
$A$-divisibility type $t=1$, so $\mu_{1\to2}(n_e)=1$ and the next block is one
type-$2$ point, $n_e^{(11,5)}=38{,}712$.

\medskip\noindent Stopping.
For $n_e=38{,}712$, $\;n_e-B_1=38{,}711$ and $3\nmid38{,}711$: odd $B$-divisibility
type $t=1$, so $\mu_{2\to1}(n_e)=0$ and the alternation ends.

\medskip
The interval therefore carries the five blocks below, the same ones displayed in
Example~\ref{Ex:pattern21212}:
\begin{table}[ht]
\centering
\begin{tabular}{c|rrrrr}
 & $\mathcal B_1$ & $\mathcal B_2$ & $\mathcal B_3$ & $\mathcal B_4$ & $\mathcal B_5$ \\ \hline
type-$2$ & $6{,}889$ &           & $12{,}248;\,16{,}331;\,21{,}775$ &           & $38{,}712$ \\
type-$1$ &           & $9{,}186$ &                                  & $29{,}034$ &            \\
\end{tabular}
\caption{The five blocks in $(5{,}167;\,51{,}617)$, listed by the last point of each
block (the middle type-$2$ block has three points).}
\label{tab:altmeters6889}
\end{table}
\end{example}

The two divisibility processes look like genuinely new machinery, with their
moving offsets and paired columns, but they compress completely. Each offset
sits one step away from an exact power of $3$, and that turns the whole
diagram into a single valuation.

\begin{corollary}[Valuation form of the transition meters]\label{C:meters-val}
Let $n_e$ be a pure point of $J_4$ as in Proposition~\ref{P:meters-div}. Then
\begin{align}
\mu_{1\to 2}(n_e)&=\nu_3(4n_e+3) \qquad \text{if $n_e$ is of type-$1$},\label{eq:mu12val}\\
\mu_{2\to 1}(n_e)&=\nu_3(4n_e+5) \qquad \text{if $n_e$ is of type-$2$}.\label{eq:mu21val}
\end{align}
\end{corollary}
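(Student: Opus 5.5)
The plan is to convert each floor condition of Figures~\ref{fig:A-diagram} and~\ref{fig:B-diagram} into a single divisibility statement about one fixed linear form, using the closed forms $A_t=\tfrac34(9^t-1)$ and $B_t=\tfrac14(9^t-5)$. The key identities are
\[
4(n_e-A_t)=4n_e+3-3^{2t+1}\cdot 1\cdot\tfrac{3^{0}}{1}\cdot 1 = (4n_e+3)-3^{2t+1},
\qquad
4(n_e-B_t)=(4n_e+5)-3^{2t}.
\]
Since $\gcd(4,3)=1$, for every $e\le 2t+1$ we have $3^e\mid(n_e-A_t)$ iff $3^e\mid(4n_e+3)$. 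Likewise, for $e\le 2t$ we have $3^e\mid(n_e-B_t)$ iff $3^e\mid(4n_e+5)$. Every condition appearing on floor $t$ of either diagram uses an exponent within these ranges: $2t$ and $2t+1$ for $A$, and $2t-1$ and $2t$ for $B$. The basement $3\mid n_e$ is $3\mid(n_e-A_0)$, which is the case $t=0$, $e=1$.

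\textbf{Type-1 case.} With this translation, "floors $0,\dots,t-1$ complete" says exactly $3^{2t-1}\mid 4n_e+3$. "Odd type $t$" then adds $3^{2t}\nmid 4n_e+3$, so $\nu_3(4n_e+3)=2t-1$. "Even type $t$" adds $3^{2t}\mid 4n_e+3$ but $3^{2t+1}\nmid 4n_e+3$, so $\nu_3(4n_e+3)=2t$. Comparing with Proposition~\ref{P:meters-div}(1b) gives $\mu_{1\to2}(n_e)=\nu_3(4n_e+3)$. Case (1a), where $3\nmid n_e$, gives $3\nmid 4n_e+3$, hence valuation $0=\mu_{1\to2}$. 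Step~0 of that proof guarantees that one of these cases occurs, so the valuation is finite and determined. (Note that $4n_e+3>0$.)

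\textbf{Type-2 case.} This runs the same way. Floors $1,\dots,t-1$ complete means $3^{2t-2}\mid 4n_e+5$. Odd type $t$ gives $\nu_3(4n_e+5)=2t-2$, and even type $t$ gives $\nu_3(4n_e+5)=2t-1$, matching Proposition~\ref{P:meters-div}(2b). For $t=1$ the empty condition means odd type is $3\nmid n_e-1$, i.e.\ $3\nmid 4n_e+5$, which is consistent with (2a).

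\textbf{Main obstacle.} The only delicate point is checking that the exponent in each floor condition never exceeds the power of $3$ subtracted in the identity. If it did, divisibility of $n_e-A_t$ would not be equivalent to divisibility of $4n_e+3$. The tops are $2t+1$ for $A$ and $2t$ for $B$, which are exactly the powers $3^{2t+1}$ and $3^{2t}$ in the identities. So the equivalence holds on every floor, and I expect this to go through directly once the identities are verified from the closed forms.
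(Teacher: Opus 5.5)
Your proposal is correct and is essentially the paper's own proof. Both rest on the exact identities $4A_t+3=3^{2t+1}$ and $4B_t+5=3^{2t}$, on the fact that multiplying by $4$ preserves divisibility by $3^e$ as long as $e$ does not exceed the exponent of the subtracted power, and on a floor-by-floor comparison of the resulting valuations with Proposition~\ref{P:meters-div}. The only blemish is cosmetic: delete the stray factors $1\cdot\tfrac{3^{0}}{1}\cdot 1$ in your first displayed identity so that it reads $4(n_e-A_t)=(4n_e+3)-3^{2t+1}$.
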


\begin{proof}
The closed forms of the offsets produce two exact powers of $3$:
\[
4A_t+3=3(9^t-1)+3=3^{2t+1} \quad (t\ge0),
\qquad
4B_t+5=(9^t-5)+5=3^{2t} \quad (t\ge1).
\]
Since $\gcd(4,3)=1$, multiplying by $4$ preserves divisibility by any power of
$3$, so for $0\le f\le 2t+1$,
\[
3^{f}\mid(n_e-A_t)
\iff 3^{f}\mid 4(n_e-A_t)=(4n_e+3)-3^{2t+1}
\iff 3^{f}\mid(4n_e+3),
\]
and in the same way, for $0\le f\le 2t$,
$3^{f}\mid(n_e-B_t)\iff 3^{f}\mid(4n_e+5)$. Every condition in
Definitions~\ref{D:divA} and~\ref{D:divB} has exactly this shape, with the
exponent never exceeding that of the subtracted power, so the whole diagram
for $n_e$ reads off a single integer. For the $A$-process: the basement fails
precisely when $\nu_3(4n_e+3)=0$, since $4n_e+3\equiv n_e$ (mod $3$); the
conditions persist through floor $t-1$ with the left column of floor $t$
failing precisely when $\nu_3(4n_e+3)=2t-1$; and the right column of floor $t$
fails on top of the left one precisely when $\nu_3(4n_e+3)=2t$. Comparing
with Proposition~\ref{P:meters-div}(1) gives \eqref{eq:mu12val} in every
case. The $B$-process is identical with $4n_e+5$: odd type $t$ is
$\nu_3(4n_e+5)=2t-2$, even type $t$ is $\nu_3(4n_e+5)=2t-1$, and the failure
$3\nmid(n_e-B_1)$ is $\nu_3(4n_e+5)=0$; comparing with
Proposition~\ref{P:meters-div}(2) gives \eqref{eq:mu21val}.
\end{proof}

\begin{remark}[One linear form per block]\label{R:meters-val}
Corollary~\ref{C:meters-val} puts the meters in the same form as the
first-block counts $m_1=\nu_3(4n_p^{(\ell)}+3)$ and
$m_2=\nu_3(4n_p^{(\ell)}+2)$ of Proposition~\ref{P:initial-block}: every block
length in an interval is the $3$-adic valuation of a linear form $4n+c$ with
$c\in\{2,3,5\}$, evaluated at the point that precedes the block. The five
meter evaluations of Example~\ref{Ex:altmeters_6889} compress to
\[
\nu_3(27{,}561)=1;\quad
\nu_3(36{,}747)=3;\quad
\nu_3(87{,}105)=1;\quad
\nu_3(116{,}139)=1;\quad
\nu_3(154{,}853)=0,
\] after using \eqref{eq:mu12val}-\eqref{eq:mu21val}, respectively.
The floor diagrams remain the proof mechanism: the valuation says how long
each block is, while the floor-by-floor construction of
Lemmas~\ref{L:finite-recursion} and~\ref{L:finite-recursion-B} is what
certifies, point by point, that the block is really there.
\end{remark}

\section{The transition structure and its use}\label{sec:structure}

The previous sections produced the local rules one at a time: the first block after a
fixed point, the length of each block from the one before it, and the local linearity
of $J_4$. This section closes the loop. We first supply the termination rule that turns
the last pure point of an interval into the next fixed point
(Subsection~\ref{sub:arrival}); we then assemble all the rules into a single structural
theorem (Subsection~\ref{sub:main}); and finally we put that structure to work,
evaluating $J_4(n)$ at an arbitrary $n$ without the linear-time recursion
(Subsection~\ref{sub:evaluation}).

\subsection{Arrival at the next fixed point}\label{sub:arrival}

The example of Section~\ref{sec:meters} stops once a transition meter returns $0$,
leaving a last pure point in hand: $n_e^{(11,5)}=38{,}712$. One piece is still missing.
We know that no further pure point follows, so the next high extremal point must be the
fixed point $n_p^{(\ell+1)}$ itself, but we have not yet said how to compute it from the
last pure point. This is what the termination rule supplies: once the
last point and its type are known, the next fixed point is one application of the engine
away.

\begin{proposition}[Termination rule and computation of the next fixed point]\label{P:term-rule}
Let $n_e^{(\ell,r)}$ be the last high extremal point between consecutive fixed points
$n_p^{(\ell)}$ and $n_p^{(\ell+1)}$.
\begin{enumerate}[label=\arabic*. , leftmargin=2.2em]
\item If $n_e^{(\ell,r)}$ is of type-$2$, then
\[
n_p^{(\ell+1)}=\frac{4n_e^{(\ell,r)}}{3}+1.
\]
\item If $n_e^{(\ell,r)}$ is of type-$1$, then
\[
n_p^{(\ell+1)}=\frac{4n_e^{(\ell,r)}+2}{3}.
\]
\end{enumerate}
\end{proposition}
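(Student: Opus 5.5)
The plan is to apply the engine step (Remark~\ref{R:maxfloor}) once more, starting from the last pure point $n_e:=n_e^{(\ell,r)}$. By hypothesis the next high extremal point $n_e^{+}$ is not a pure point, so it must be the fixed point $n_p^{(\ell+1)}$. Corollary~\ref{C:successor} tells us exactly when $n_e^{+}$ is fixed: precisely when $n_e\equiv -t-1 \pmod 3$, where $t$ is the type of $n_e$. So the first step is to extract the residue of $n_e$ from the hypothesis that $n_e^{+}$ is a fixed point. For $t=2$ this gives $n_e\equiv 0 \pmod 3$; for $t=1$ it gives $n_e\equiv 1 \pmod 3$. (Equivalently, these are the cases in which the relevant meter vanishes and the successor is not of the same type: Corollary~\ref{C:meters-val} and Proposition~\ref{P:meters-div} with $\nu_3=0$, combined with $n_e\not\equiv 2$.)

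With the residue fixed, I would apply Lemma~\ref{L:J4_engine}(c) directly. The type-value identity (Lemma~\ref{L:type-value}) gives $j=J_4(n_e+1)=3-t$, and the fixed-point condition $\varepsilon=j-1$ is exactly the congruence just obtained. Part (c) then yields
\[
n_p^{(\ell+1)}=n_e^{+}=\frac{4(n_e+1)-j}{3}.
\]
Substituting $j=1$ for type-$2$ gives $(4n_e+3)/3=4n_e/3+1$, which is integral because $3\mid n_e$. Substituting $j=2$ for type-$1$ gives $(4n_e+2)/3$, which is integral because $n_e\equiv1 \pmod 3$. These are the two displayed formulas.

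As a cross-check, I would also verify both cases through Lemma~\ref{L:maxfloorJ4}. For type-$2$, write $n_e=3s$ and take $\alpha=1$, so $x_0=s$; then $n_e^{+}=4s+1$ and $J_4(n_e^{+})=1+4s$. For type-$1$, write $n_e=3s+1$ and take $\alpha=2$, so $x_0=s$; then $n_e^{+}=4s+2=J_4(n_e^{+})$.

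The main obstacle is conceptual rather than computational. We must justify that the high extremal point immediately after $n_e^{(\ell,r)}$ is the next fixed point and not some further pure point. That is, "last high extremal point in the interval" must be read correctly, and the threshold $n_e\ge6$ must hold. The threshold follows from $n_e>n_p^{(\ell)}\ge 21$ once $\ell\ge2$, together with Lemma~\ref{L:monotone}, which ensures that no high extremal point lies strictly between $n_e$ and $n_e^{+}$.
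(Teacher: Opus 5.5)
Your proposal is correct and takes essentially the paper's route. You identify the successor of the last pure point as $n_p^{(\ell+1)}$, obtain $\varepsilon=j-1$ (you through Corollary~\ref{C:successor}, the paper directly from Lemma~\ref{L:J4_engine}(c)), and substitute $j=3-t$ into $n_e^{+}=(4(n_e+1)-j)/3$. One citation should be fixed: the fact that no high extremal point lies strictly between $n_e$ and $n_e^{+}$ comes from Lemma~\ref{L:J4_engine}(b) and Remark~\ref{R:maxfloor}, not from Lemma~\ref{L:monotone}, which only gives strict growth.
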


\begin{proof}
Let $n_e:=n_e^{(\ell,r)}$ be the last high extremal point in
$(n_p^{(\ell)};\,n_p^{(\ell+1)})$. Since $n_e\ge 6$, Lemma~\ref{L:J4_engine} applies;
put
\[
j:=J_4(n_e+1)\in\{1, 2, 3\}, \qquad 
\varepsilon\equiv n_e\pmod 3,\ \ \varepsilon\in\{0, 1, 2\}.
\]
Because $n_e$ is the last high extremal point inside the interval, and no high
extremal point lies strictly between $n_e$ and its successor $n_e^{+}$
(Remark~\ref{R:maxfloor}), the successor is the fixed point $n_p^{(\ell+1)}$
itself. By Lemma~\ref{L:J4_engine}(c) this forces
$\varepsilon=j-1$, in which case
\begin{equation}\label{eq:nextfixed_from_engine}
n_p^{(\ell+1)}=n_e^{+}=\frac{4(n_e+1)-j}{3}.
\end{equation}
The two cases differ only in the value of $j$.

\medskip
\noindent Case 1: $n_e$ of type-$2$.
By the type-value identity (Lemma~\ref{L:type-value}), $j=J_4(n_e+1)=1$, so
$\varepsilon=0$ and $n_e\equiv0$ (mod $3$). With $j=1$,
\eqref{eq:nextfixed_from_engine} becomes
\[
n_p^{(\ell+1)}=\frac{4(n_e+1)-1}{3}=\frac{4n_e}{3}+1.
\]

\medskip
\noindent Case 2: $n_e$ of type-$1$.
Here Lemma~\ref{L:type-value} gives $j=J_4(n_e+1)=2$, so $\varepsilon=1$ and
$n_e\equiv1$ (mod $3$). With $j=2$,
\[
n_p^{(\ell+1)}=\frac{4(n_e+1)-2}{3}=\frac{4n_e+2}{3}.
\]
This gives both formulas.
\end{proof}

\begin{example}[Closing the interval $(5{,}167;\,51{,}617)$]
\label{Ex:termination_38712}
Example~\ref{Ex:altmeters_6889} ended at the last pure point $n_e^{(11,5)}=38{,}712$,
of type-$2$. Proposition~\ref{P:term-rule}(1) then closes the interval:
\[
n_p^{(12)}=\frac{4\cdot38{,}712}{3}+1=51{,}617,
\]
matching Table~\ref{tab:J4_fixed_points}. So the passage from $n_p^{(11)}=5{,}167$ to
$n_p^{(12)}=51{,}617$ runs through five blocks and ends with one division.
\end{example}

With this, the local picture is complete: we can find the first block, the length of
every block after it, and the fixed point at which the interval closes. What remains
is to state the combined result.

\subsection{The main structural theorem}\label{sub:main}

We can now collect the pieces. Proposition~\ref{P:initial-block} fixes the first
block, Proposition~\ref{P:meters-div} the length of each subsequent one, and
Proposition~\ref{P:term-rule} the exit to the next fixed point; the theorem below
states what their combination yields.

\begin{theorem}[Fixed-point transition structure for $J_4$]\label{thm:main}
Let $n_p^{(\ell)}$ and $n_p^{(\ell+1)}$ be consecutive fixed points of $J_4$ with
$n_p^{(\ell)}\ge3$ (that is, $\ell\ge2$, excluding only $n_p^{(1)}=1$). Exactly one of
the following holds.

\medskip
\noindent\textbf{(i)} The interval
$(n_p^{(\ell)},n_p^{(\ell+1)})$ contains no pure points.

\medskip
\noindent\textbf{(ii)} The pure points of the interval split into a
finite list of blocks
\[
\mathcal B_1;\,\mathcal B_2;\,\dots;\,\mathcal B_r;\,\qquad r=r(\ell)\ge1,
\]
each a maximal run of pure points of one type, with consecutive blocks of opposite
types.

\medskip
In case (ii) the whole transition is determined, with no reference to intermediate
values of $J_4$, by four ingredients:
\begin{enumerate}[label=\textup{(\arabic*)}, leftmargin=2.4em]
\item the ternarity of $n_p^{(\ell)}$, together with the valuation
$m_1=\nu_3(4n_p^{(\ell)}+3)$ or $m_2=\nu_3(4n_p^{(\ell)}+2)$, which fix the type,
length, and last point of $\mathcal B_1$ (Proposition~\ref{P:initial-block});
\item the transition meters $\mu_{1\to2}$ and $\mu_{2\to1}$, which give the length of
each later block from the last point of the previous one
(Proposition~\ref{P:meters-div}; in valuation form, Corollary~\ref{C:meters-val});
\item the first meter value $0$, which marks the last block; and
\item the termination rule, which turns the last pure point into $n_p^{(\ell+1)}$
(Proposition~\ref{P:term-rule}).
\end{enumerate}
\end{theorem}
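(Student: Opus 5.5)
The plan is to walk the sequence of high extremal points from $n_p^{(\ell)}$ to $n_p^{(\ell+1)}$ one engine step at a time and check that every step is accounted for by one of the four ingredients. First, the dichotomy (i)/(ii) is immediate: either the open interval contains a pure point or it does not. By Lemma~\ref{L:monotone} the high extremal points $\ge6$ form a strictly increasing sequence, and only finitely many of them lie in the interval, so the list of pure points is finite. Define the blocks as in Definition~\ref{D:blocks}. Alternation then follows from Corollary~\ref{C:successor}. A maximal run of one type ends at a point $m$ whose successor has a different type. That successor is either a fixed point, which must be $n_p^{(\ell+1)}$ because no high extremal point is skipped (Remark~\ref{R:maxfloor}), or a pure point of the only other available type. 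Hence consecutive blocks have opposite types. Case (i) occurs exactly when $n_p^{(\ell)}\equiv2 \pmod 3$, by Proposition~\ref{P:initial-block}(1). In the other two residue classes the first block is nonempty, since $m_1,m_2\ge1$, so exactly one of (i), (ii) holds.

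For the determination claim in case (ii), I would argue by induction on the block index $j$ that each last point $n_e^{(\ell,j)}$ and its type are computed from $n_p^{(\ell)}$ by the stated rules alone. For $j=1$ this is Proposition~\ref{P:initial-block}(2)--(3). The ternarity fixes the type, and the valuation $m_1$ or $m_2$ gives both the length and the explicit last point. For the step from $j$ to $j+1$, let $n_e=n_e^{(\ell,j)}$ have type $t$. Apply the meter $\mu_{t\to 3-t}(n_e)$, which equals $\nu_3(4n_e+3)$ or $\nu_3(4n_e+5)$ by Corollary~\ref{C:meters-val}. If it is positive, Proposition~\ref{P:meters-div}, through Lemmas~\ref{L:finite-recursion} and~\ref{L:finite-recursion-B}, shows that exactly that many points of type $3-t$ follow, given by the closed forms in those lemmas. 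These points therefore constitute $\mathcal B_{j+1}$. Its last point is the closed-form point at index $q=\mu$, and so it is again computable without evaluating $J_4$.

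The point needing care is ingredient (3): a meter value $0$ must correspond exactly to the end of the interval. Since $n_e$ closes a maximal block, Corollary~\ref{C:successor} forces $n_e\not\equiv2 \pmod 3$, so its successor is either a pure point of the opposite type or a fixed point. A meter value of $0$ excludes the opposite type, so the successor is a fixed point, namely $n_p^{(\ell+1)}$, and $\mathcal B_j$ is the last block. Conversely, at the last block the successor is $n_p^{(\ell+1)}$, so no opposite-type point follows and the meter is $0$. Also, a positive meter cannot overshoot the interval, because the points it produces are pure and come before any fixed point. Termination of the induction follows from finiteness (Lemma~\ref{L:monotone}), so $r$ is well defined. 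Finally, Proposition~\ref{P:term-rule} converts $n_e^{(\ell,r)}$ and its type into $n_p^{(\ell+1)}$. I expect the main obstacle to be checking the hypotheses of Proposition~\ref{P:meters-div}: the block endpoint $n_e$ must satisfy the entry conditions, or else fall under case (a). That bookkeeping reduces to the residue of $n_e$ modulo $3$, which the valuation form settles cleanly.
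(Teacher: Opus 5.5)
Your proposal is correct and follows essentially the same route as the paper's proof. The first block comes from Proposition~\ref{P:initial-block}, and each later block from the matching meter via Proposition~\ref{P:meters-div}; finiteness comes from Lemma~\ref{L:monotone}; and, as in the paper, a meter value of $0$ at a block endpoint $m\not\equiv2 \pmod 3$ (Corollary~\ref{C:successor}) forces the successor to be $n_p^{(\ell+1)}$, which Proposition~\ref{P:term-rule} then computes. You are somewhat more explicit than the paper in two places, tying case (i) to ternarity $2$ and proving the converse that the last block's endpoint has meter $0$, but these refine the same argument rather than change it.
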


\begin{proof}
If the transition is direct, the interval contains no pure points and there is nothing
more to prove, so assume it is not.

By Proposition~\ref{P:initial-block}, the ternarity of $n_p^{(\ell)}$ and the
valuation $m_1$ or $m_2$ produce the first block $\mathcal B_1$ and its last point.
Given the last point of any block, Proposition~\ref{P:meters-div} returns the next
block through the matching meter $\mu_{1\to2}$ or $\mu_{2\to1}$, a run of pure points
of the opposite type. Repeating this generates blocks of alternating type.

The iteration cannot run forever. Each block consists of high extremal points $\ge6$,
and by Lemma~\ref{L:monotone} every meter step moves to a strictly larger point; since
all of them lie in the finite interval $(n_p^{(\ell)},n_p^{(\ell+1)})$, only finitely
many occur, giving a finite list $\mathcal B_1,\dots,\mathcal B_r$. The process halts at the first meter value~$0$ (Proposition~\ref{P:meters-div},
cases~1(a) and~2(a)). Let $m$ be the last pure point at that moment. Its
successor under the engine is a high extremal point
(Lemma~\ref{L:J4_engine}(b)); it is not of the opposite type, because the
meter is~$0$; and it is not of the same type, because the meter propositions
produce maximal runs, so the block containing $m$ ends at $m$. (Equivalently,
$m\not\equiv2$ (mod $3$), the continuation criterion of
Corollary~\ref{C:successor}.) The only type left is~$0$: the successor is the
fixed point $n_p^{(\ell+1)}$, and the termination rule
(Proposition~\ref{P:term-rule}) computes it from $m$. Each step used only the
ingredients listed, so the whole passage is determined by them.
\end{proof}

In short, the pure points between two fixed points are not scattered arbitrarily:
they fall into a finite alternating run of blocks whose lengths are read off from
explicit $3$-adic valuations. The theorem is also constructive. Starting from a fixed
point, one builds the first block from its ternarity, generates the remaining blocks
by iterating the transition meters, and recovers the next fixed point from the
termination rule.

\subsection{Evaluation of $J_4$ at an arbitrary argument}\label{sub:evaluation}

The structural description above has a direct computational
payoff: it yields a formula for $J_4(n)$ at an arbitrary $n$ that avoids the
linear-time recursion $J_4(n)=\bigl(J_4(n-1)+3\bigr)\bmod n+1$. The idea is that
$J_4$ is piecewise linear of slope~$4$ between consecutive high extremal points, so
once we locate the high extremal point that opens the segment containing $n$, the
value $J_4(n)$ is given by a single linear expression.

We first record the segment formula, which is just a restatement of
Lemma~\ref{L:J4_engine}(a).

\begin{lemma}[Linear segments of $J_4$]\label{L:segment}
Let $n_e\ge6$ be a high extremal point of $J_4$, let $j:=J_4(n_e+1)$, and let
$n_e^{+}$ be the next high extremal point. Then
\begin{equation}\label{eq:segment}
J_4(n)=j+4\,(n-n_e-1)\qquad\text{for every } n\in\{n_e+1,\dots,n_e^{+}\}.
\end{equation}
\end{lemma}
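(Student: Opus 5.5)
The plan is to read this off Lemma~\ref{L:J4_engine} directly; the only work is matching the index ranges. Set $m:=n-n_e-1$, so that as $n$ runs over $\{n_e+1,\dots,n_e^{+}\}$, $m$ runs over $\{0,1,\dots,n_e^{+}-n_e-1\}$. Lemma~\ref{L:J4_engine}(a) gives $J_4(n_e+1+m)=j+4m$ for $0\le m\le m_0$, where $m_0=\lfloor(n_e-(j-1))/3\rfloor$. Substituting $m=n-n_e-1$ yields exactly \eqref{eq:segment}. It therefore suffices to show that $n_e^{+}-n_e-1=m_0$, i.e.\ $n_e^{+}=n_e+1+m_0$.

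That identity is what the proof of Lemma~\ref{L:J4_engine}(b) establishes. There one writes $n_e=3q+\varepsilon$ and uses~\cite[Lemma~4(b)]{BCQC-JIS} to get $m_0=q-\delta$, hence $n_e^{+}=n_e+1+m_0=4q+\varepsilon+1-\delta$. Alternatively, I would cite the engine step of Remark~\ref{R:maxfloor}. There $\alpha=j$ and $x_0=\lfloor(n_e+1-j)/3\rfloor$, which is the same floor as $m_0$. The remark already records both $n_e^{+}=n_e+1+x_0$ and the absence of high extremal points strictly between $n_e$ and $n_e^{+}$.

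The endpoint $n=n_e^{+}$ needs one check: the formula should give $J_4(n_e^{+})=j+4m_0$. This agrees with Lemma~\ref{L:J4_engine}(b), since $J_4(n_e^{+})=\alpha+4x_0$ in Remark~\ref{R:maxfloor}. So the segment formula holds on the whole closed range, including the top of the run.

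I expect no real obstacle. The only delicate point is confirming that the floor in part~(a) and the offset $x_0$ in Remark~\ref{R:maxfloor} coincide, $\lfloor(n_e-(j-1))/3\rfloor=\lfloor(n_e+1-j)/3\rfloor$. This is immediate, so the range of validity in part~(a) reaches exactly up to $n_e^{+}$ and no farther.
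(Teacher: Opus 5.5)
Your proposal is correct and matches the paper's proof: both substitute $m=n-n_e-1$ into Lemma~\ref{L:J4_engine}(a) and use $n_e^{+}=n_e+1+m_0$ from part~(b) to see that $0\le m\le m_0$ is exactly the range $\{n_e+1,\dots,n_e^{+}\}$. Your separate endpoint check is harmless but redundant, since $m=m_0$ is already covered by part~(a).
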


\begin{proof}
By Lemma~\ref{L:J4_engine}(a), $J_4(n_e+1+m)=j+4m$ for $0\le m\le m_0$, and by
Lemma~\ref{L:J4_engine}(b) the next high extremal point is $n_e^{+}=n_e+1+m_0$.
Setting $m=n-n_e-1$ gives \eqref{eq:segment} on the stated range.
\end{proof}

To use \eqref{eq:segment} we must find, for a given $n$, the high extremal point
$n_e$ that opens its segment. The high extremal points of a single interval are few
and are generated one from the next by the engine, so this is a short walk. Write
$\Phi(n_e)$ for the map sending a high extremal point to its successor: with
$j=3-t$ the value of $J_4$ just after a point of type $t$,
$\varepsilon\in\{0,1,2\}$ the residue of $n_e$
(mod $3$), and $\delta=1$ if $\varepsilon<j-1$ and $\delta=0$ otherwise,
Lemma~\ref{L:J4_engine}(b) gives
\begin{equation}\label{eq:Phi}
\Phi(n_e)=\frac{4(n_e+1)-(\varepsilon+1)}{3}-\delta,
\qquad
\operatorname{type}\bigl(\Phi(n_e)\bigr)=3\delta+\varepsilon-(j-1).
\end{equation}

\begin{theorem}[Evaluation of $J_4$ via the fixed point sequence]\label{thm:evaluation}
Let $n\ge7$, and let $n_p^{(\ell)}$ be the largest fixed point with
$n_p^{(\ell)}\le n$. Define a finite sequence of high extremal points by
\[
e_0:=n_p^{(\ell)},\qquad e_{i+1}:=\Phi(e_i),
\]
and let $r$ be the first index with $n\le e_{r+1}$. Then
\[
J_4(n)=j_r+4\,(n-e_r-1),
\qquad j_r:=3-\operatorname{type}(e_r),
\]
where $\operatorname{type}(e_0)=0$ and the types of $e_1,e_2,\dots$ are computed
along the way from \eqref{eq:Phi}. If $n=n_p^{(\ell)}$ then $J_4(n)=n$.
\end{theorem}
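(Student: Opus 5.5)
The plan is to reduce the statement to the segment formula of Lemma~\ref{L:segment} by showing that the walk $e_0,e_1,\dots$ enumerates, in order, exactly the high extremal points from $n_p^{(\ell)}$ onward, so that $n$ falls in the half-open segment $(e_r,e_{r+1}]$ opened by $e_r$. First I dispose of the case $n=n_p^{(\ell)}$, which is immediate from the definition of a fixed point. Otherwise $n>n_p^{(\ell)}$, and since $n\ge7$ and the fixed points below $21$ are only $1$, I note that when $n_p^{(\ell)}=1$ the claim needs separate handling: there $e_0=1<6$ lies outside the regular range. I would treat this by checking the finitely many $n$ with $7\le n<21$ directly (the high extremal points $6,8,11,15$ and the values of $J_4$ are explicit), or by restarting the walk at $6$; for $\ell\ge2$ we have $e_0\ge21\ge6$ and $J_4(e_0+1)=3$ by the type-value identity (Lemma~\ref{L:type-value}) with $t=0$, consistent with $j_0=3-\operatorname{type}(e_0)=3$.

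Next, by induction on $i$, I show that each $e_i$ is a high extremal point $\ge6$, that $\operatorname{type}(e_i)$ is correctly given by \eqref{eq:Phi}, and that $e_{i+1}$ is the next high extremal point after $e_i$. The inductive step is exactly Lemma~\ref{L:J4_engine}(b) with $j=J_4(e_i+1)=3-\operatorname{type}(e_i)$, the latter equality being Lemma~\ref{L:type-value}; Remark~\ref{R:maxfloor} guarantees that no high extremal point lies strictly between $e_i$ and $e_{i+1}$. By Lemma~\ref{L:monotone} the $e_i$ are strictly increasing and unbounded (gaps at least $(e_i-2)/3$), so an index with $n\le e_{r+1}$ exists, and taking the first such $r$ gives $e_r<n\le e_{r+1}$ (for $r\ge1$ by minimality; for $r=0$ because $n>e_0$). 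Moreover, since $n_p^{(\ell)}$ is the largest fixed point $\le n$, the walk reaches $n$ before passing the next fixed point, so $r$ is at most the number of pure points in the interval, which justifies the remark that the walk stays within one interval; this is not needed for correctness.

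Finally, Lemma~\ref{L:segment} applied to $n_e=e_r$, $n_e^+=e_{r+1}$ and $j=j_r$ yields $J_4(n)=j_r+4(n-e_r-1)$ for $n\in\{e_r+1,\dots,e_{r+1}\}$, which is the claim. The main obstacle is not the algebra but the bookkeeping at the boundary. I need to make sure $\Phi$ really is the successor map on the regular range (including the case where $e_i$ is a fixed point, where the type resets to $0$), and to handle the degenerate starting point $n_p^{(1)}=1$ for $7\le n<21$. For the latter I would simply verify the values from the explicit table, or replace $e_0$ by the high extremal point $6$ of type $1$.
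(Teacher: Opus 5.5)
Your proposal is correct and is essentially the paper's proof: you identify the walk $e_0,e_1,\dots$ with the successive high extremal points through Lemma~\ref{L:J4_engine}(b) and the type-value identity, use Lemma~\ref{L:monotone} to get $e_r<n\le e_{r+1}$, and finish with Lemma~\ref{L:segment} at $e_r$. Your separate treatment of $n_p^{(1)}=1$ (that is, $7\le n\le 20$) covers a case the paper's proof passes over, since it applies the engine at $e_0=1<6$. The direct check works for the walk exactly as defined: formally $\Phi(1)=1$ with computed type $2$, after which the walk runs $2,3,4,6,8,11,15,21$ with correct types, so the segment containing any $n\ge7$ opens at some $e_r\ge6$. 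By contrast, restarting at $6$ would verify a different walk from the one in the statement.
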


\begin{proof}
By the definition of $\Phi$ as the next-high-extremal-point map of
Lemma~\ref{L:J4_engine}(b), the points $e_0,e_1,\dots$ are exactly the high extremal
points of $J_4$ from $n_p^{(\ell)}$ onward, listed in increasing order. Suppose first
that $n$ is not a fixed point. Then $n_p^{(\ell)}<n<n_p^{(\ell+1)}$, since
$n_p^{(\ell)}$ is the largest fixed point at most $n$, so $n$ falls in one of the
half-open segments $(e_i,e_{i+1}]$, which tile $(e_0,\infty)$ because each
$e_{i+1}$ is the high extremal point immediately after $e_i$. The index $r$
singled out in the statement satisfies $e_r<n$: if $r=0$ this is
$n>n_p^{(\ell)}$, and if $r\ge1$ then $n>e_r$ because $r$ is the first index
with $n\le e_{r+1}$. Lemma~\ref{L:segment}, applied at the high extremal point
$e_r$ with $j_r=J_4(e_r+1)=3-\operatorname{type}(e_r)$ (Lemma~\ref{L:type-value}),
gives $J_4(n)=j_r+4(n-e_r-1)$. The walk is finite
because $\Phi$ strictly increases (Lemma~\ref{L:monotone}) and only finitely many high
extremal points lie below $n$. Finally, if $n=n_p^{(\ell)}$ then $n$ is a fixed point
and $J_4(n)=n$ by definition.
\end{proof}

The cost of this evaluation is the cost of locating $n_p^{(\ell)}$ in the fixed point
sequence plus the length of the walk. The walk visits $r+1$ high extremal points,
which is the number of high extremal points of $J_4$ in $[\,n_p^{(\ell)},n\,]$; this
is at most the total number of pure points in the interval
$(n_p^{(\ell)},n_p^{(\ell+1)})$ plus one, a quantity that does not grow with $n$ inside
the interval. Each step of the walk is a constant number of arithmetic operations on
integers of $O(\log n)$ bits. So the procedure never touches the
$\Theta\bigl(n_p^{(\ell+1)}-n_p^{(\ell)}\bigr)$ intermediate arguments that the
naive recursion $J_4(n)=(J_4(n-1)+3)\bmod n+1$ steps through; locating the
segment, not scanning to it, is what makes the evaluation fast. Extending the
fixed point table itself, when $n$ outruns it, costs little more: by
Corollary~\ref{C:meters-val}, the recurrence of Theorem~\ref{thm:main} spends
one $3$-adic valuation per block and one exact division per interval.

\begin{example}[Evaluating $J_4(10{,}000)$]\label{Ex:eval10000}
The largest fixed point not exceeding $10{,}000$ is $n_p^{(11)}=5{,}167$ (see
Table~\ref{tab:J4_fixed_points}), so we set $e_0=5{,}167$, of type-$0$. The map
$\Phi$ gives the successive high extremal points
\[
e_0=5{,}167\ \xrightarrow{\ \Phi\ }\ e_1=6{,}889\ (\text{type }2)
\ \xrightarrow{\ \Phi\ }\ e_2=9{,}186\ (\text{type }1)
\ \xrightarrow{\ \Phi\ }\ e_3=12{,}248 .
\]
Since $9{,}186<10{,}000\le 12{,}248$, the segment containing $10{,}000$ opens at
$e_2=9{,}186$, which is of type-$1$, so $j_2=3-1=2$. By
Theorem~\ref{thm:evaluation},
\[
J_4(10{,}000)=2+4\,(10{,}000-9{,}186-1)=2+4\cdot813=3{,}254,
\]
reached after a walk of only three high extremal points rather than the several
thousand steps of the naive recursion.
\end{example}

\section{Concluding remarks}\label{sec:conclusion}

This paper has established a complete recursive description of the transition between
consecutive fixed points of the Josephus function $J_4$. The main structural result,
Theorem~\ref{thm:main}, shows that the pure high extremal points between any two
consecutive fixed points $n_p^{(\ell)}$ and $n_p^{(\ell+1)}$ organize into a finite
alternating run of type-$1$ and type-$2$ blocks, and that
the entire transition is governed by explicitly computable ingredients: the
ternarity of $n_p^{(\ell)}$, the first-block valuations $\nu_3(4n_p^{(\ell)}+2)$
and $\nu_3(4n_p^{(\ell)}+3)$, the transition meters $\mu_{1\to2}$ and
$\mu_{2\to1}$, and the termination rule of Proposition~\ref{P:term-rule}. Every
block length in the chain is a single $3$-adic valuation of a linear form
$4n+c$ with $c\in\{2,3,5\}$ (Corollary~\ref{C:meters-val}), evaluated at the
point that precedes the block, so the whole interval is read off by a short
list of valuations.

Compared with the case $k=3$ of~\cite{BCQC-J3}, the case $k=4$ is more
intricate, though less than it first appears. For $k=3$ a single divisibility
index $\overline{m}_\ell=\nu_2(3n_p^{(\ell)}+2)$ carries one fixed point to the
next. For $k=4$ a single index no longer suffices: the two types of pure points
alternate, so an interval needs a chain of valuations rather than one. Each
link in that chain is itself a single $3$-adic valuation
(Corollary~\ref{C:meters-val}); the two offset sequences $(A_t)$ and $(B_t)$
and the paired floor tests are the machinery that proves it, not extra data the
user must compute. The genuine new feature at $k=4$ is therefore the
alternation, not the valuations, and one expects the alternation to grow with
$k$.

Several directions remain open. The most immediate is $k=5$. The pattern of the
solved cases suggests that the high extremal points there fall into $k-2=3$ pure
types, falling $1$, $2$, or $3$ short of the diagonal, with the blocks between
consecutive fixed points alternating among these types; we have not proved
this, and even the number of types is part of what a treatment of $k=5$ would
have to settle. Numerical experiments up to $2\cdot10^6$ are consistent with
the picture: exactly the three predicted types occur, and consecutive blocks
carry distinct types. The divisibility-based method of this paper should still apply, but with
more offset sequences and a more involved interaction among them.

More broadly, one may ask for a single framework covering all $k\ge2$, in which
the step from one fixed point to the next is governed by finitely many valuations
of explicit linear forms in the current fixed point. So far the relevant modulus
is $k-1$: base $2$ governs $k=3$ and base $3$ governs $k=4$, while $k=2$ is
degenerate ($k-1=1$, fixed points $2^\ell-1$). For $k=3$ and $k=4$ the modulus
$k-1$ is prime, so these are genuine $p$-adic valuations; the next value, $k=5$,
is the first with $k-1$ composite, and whether the right object there is a
$4$-adic count or a pair of $2$-adic ones is exactly the question that
distinguishes a clean general framework from a case-by-case ascent. We have not
settled it.

The fixed point recurrence also gives an efficient way to evaluate $J_4(n)$ at an
arbitrary argument. Subsection~\ref{sub:evaluation} makes this precise:
Theorem~\ref{thm:evaluation} locates the discrete linear segment of $J_4$ containing $n$ by a
short walk through the high extremal points of a single fixed point interval, and
then reads off $J_4(n)$ from one linear expression, never visiting the intervening
arguments. This is the $k=4$ analogue of the direct evaluation available for $k=3$
through~\cite{BCQC-J3}. It would be of interest to compress the walk itself into a
closed form, expressing the relevant high extremal point directly in terms of
$n_p^{(\ell)}$ and the block data, so that $J_4(n)$ is obtained without any
iteration at all.

\section{Acknowledgments}

The first author was partially supported by the National Science Foundation
under grant DMS-2307328.

\bigskip
\hrule
\bigskip

\noindent 2020 {\it Mathematics Subject Classification}:
Primary 11B37; Secondary 11A07, 11B83, 05A99.

\noindent \emph{Keywords:}
Josephus problem, Josephus function, fixed point, $3$-adic valuation, recurrence,
high extremal point, integer sequence.

\bigskip
\hrule
\bigskip

\noindent (Concerned with sequences
\seqnum{A006257}, \seqnum{A032434}, \seqnum{A182459},
and \seqnum{A385333}.)

\end{document}